\DeclareSymbolFont{cyrillic}{T2A}{cmr}{m}{n}
\def\makecyrsymbol#1#2{%
  \begingroup\edef\temp{\endgroup
    \noexpand\DeclareMathSymbol{\noexpand#1}
    {\noexpand\mathalpha}{cyrillic}%
    {\expandafter\expandafter\expandafter
     \calccyr\expandafter\meaning\csname T2A\string#2\endcsname\end}}%
  \temp}
\def\expandafter\calccyr\string\char#1\end{#1}
\newtheoremstyle{dotless}{}{}{\itshape}{}{\bfseries}{}{ }{}
\newtheorem{thm}{Theorem}
\newtheorem{lem}[thm]{Lemma}
\newtheorem{cor}[thm]{Corollary}
\newtheorem{defn}[thm]{Definition}
\theoremstyle{dotless}
\newcommand{\End}{\mathop{\text{End}}}
\newcommand{\Hom}{\mathop{\text{Hom}}}
\newcommand{\im}{\mathrm{im}\,}
\newcommand{\Z}{\mathbb{Z}}
\newcommand{\R}{\mathbb{R}}
\newcommand{\Q}{\mathbb{Q}}
\renewcommand{\C}{\mathbb{C}}
\newcommand{\Nm}{\mathrm{Nm}}
\newcommand{\F}{\mathbb{F}}
\newcommand{\id}{\mathop{\mathrm{id}}}
\newcommand{\N}{\mathbb{N}}
\newcommand{\Gal}{\mathop{\mathrm{Gal}}}
\newcommand{\pfrak}{\mathfrak{p}}
\newcommand{\qfrak}{\mathfrak{q}}
\newcommand{\mfrak}{\mathfrak{m}}
\newcommand{\inj}{\hookrightarrow}
\newcommand{\surj}{\twoheadrightarrow}
\newcommand{\Ind}{\mathrm{Ind}}
\newcommand{\triv}{\mathop{\mathrm{triv}}}
\newcommand{\SL}{\mathrm{SL}}
\newcommand{\GL}{\mathrm{GL}}
\newcommand{\PSL}{\mathrm{PSL}}
\newcommand{\actson}{\curvearrowright}
\newcommand{\mat}[4]{\left(\begin{array}{cc} #1 & #2\\ #3 & #4\end{array}\right)}
\newcommand{\twobytwo}[4]{\mat{#1}{#2}{#3}{#4}}
\newcommand{\iso}{\cong}
\newcommand{\eps}{\varepsilon}
\newcommand{\Frob}{\mathrm{Frob}}
\newcommand{\ofrak}{\mathfrak{o}}
\newcommand{\I}{\mathbb{I}}
\newcommand{\invlim}{\varprojlim}
\newcommand{\disjcup}{\bigsqcup}
\newcommand{\rk}{\mathop{\mathrm{rk}}}
\newcommand{\Tor}{\mathrm{Tor}}
\renewcommand{\o}{\ofrak}
\newcommand{\Cl}{\mathrm{Cl}}
\newcommand{\Qbar}{\overline{\Q}}
\renewcommand{\P}{\mathbb{P}}
\renewcommand{\I}{\mathrm{I}}
\newcommand{\tr}{\mathrm{tr}}
\newcommand{\diag}{\mathrm{diag}}
\newcommand{\PGL}{\mathrm{PGL}}
\newcommand{\Jac}{\mathrm{Jac}}
\newcommand{\GSp}{\mathrm{GSp}}
\newcommand{\Lie}{\mathrm{Lie}}
\newcommand{\Fbar}{\overline{\F}}
\newcommand{\rhobar}{\overline{\rho}}
\newcommand{\nfrak}{\mathfrak{n}}
\renewcommand{\End}{\mathrm{End}}
\newcommand{\twoFone}[4]{\, _2F_1\left(\left.\begin{array}{cc} #1 & #2\\ & #3\end{array}\right| #4 \right)}
\newcommand{\const}{\mathrm{const.}}
\let\phi\varphi
\let\emptyset\varnothing
\let\@@pmod\pmod
\DeclareRobustCommand{\pmod}{\@ifstar\@pmods\@@pmod}
\def\@pmods#1{\mkern4mu({\operator@font mod}\mkern 6mu#1)}
 \DeclareFontFamily{U}{wncy}{}
    \DeclareFontShape{U}{wncy}{m}{n}{<->wncyr10}{}
    \DeclareSymbolFont{mcy}{U}{wncy}{m}{n}
    \DeclareMathSymbol{\Sha}{\mathord}{mcy}{"58}
\title{Modularity and effective Mordell I.}
\author{\Large Levent Alp\"{o}ge}
\date{}
\begin{document}

\maketitle

\renewcommand{\abstractname}{Abstract.}

\begin{abstract}
We give an effective proof of Faltings' theorem for curves mapping to Hilbert modular stacks over odd-degree totally real fields.

We do this by giving an effective proof of the Shafarevich conjecture for abelian varieties of $\GL_2$-type over an odd-degree totally real field.

We deduce for example an effective height bound for $K$-points on the curves $C_a : x^6 + 4y^3 = a^2$ ($a\in K^\times$) when $K$ is odd-degree totally real.

(Over $\Qbar$ all hyperbolic hyperelliptic curves admit an \'{e}tale cover dominating $C_1$.)
\end{abstract}

\section{Introduction.}

Faltings' theorem is one of the classic ineffective results in mathematics. His (first) method of proof is roughly to realize, following Parshin, a given curve inside a moduli space, and then to prove a finiteness result for integral points on that moduli space (compactness provides integrality). He is forced to work with the entire moduli space of principally polarized abelian varieties because of a construction of Kodaira that he invokes.

This forces him to consider quite general Galois representations. We will deal only with Galois representations valued in $\GL_2$ by working only with curves mapping to Hilbert modular varieties\footnote{We will use this terminology for the fine moduli spaces, which are, without level structures imposed, not varieties but stacks.} (or, slightly more generally, curves over which there is a non-isotrivial family of $\GL_2$-type abelian varieties). We will further put ourselves in a situation where it is known that there are motives attached to the conjecturally corresponding automorphic forms by working only over odd-degree totally real fields.

In that situation we will prove that all relevant abelian varieties become modular over a computable\footnote{We remind the reader that to say that a quantity is computable, or, equivalently, effectively computable, is to say that there is a Turing machine that terminates on all inputs and which, on input the relevant parameters --- in this case the base number field and bounds on the dimensions and conductors of the relevant abelian varieties --- outputs said quantity.} finite list of odd-degree totally real extensions, and then deduce a height bound on all such abelian varieties using the usual construction of an abelian variety associated to a parallel weight two Hilbert modular eigencuspform over an odd-degree totally real field.

The technique of course further gives an effective determination of the $S$-integral $K$-points on a Hilbert modular variety (or indeed, interpreted suitably, all Hilbert modular varieties of given dimension together) when $K$ is totally real of odd degree. It also generalizes naturally assuming the usual Galois-to-automorphic-to-motivic conjectures.

We conclude by giving an example application, namely to the curves $x^6 + 4y^3 = a^2$ ($a\in \Qbar^\times$ odd-degree totally real), over each of which we use a particular hypergeometric family.

\subsection{Main theorems.}

We prove the following.

\begin{thm}\label{the odd degree theorem}
There is a finite-time algorithm that, on input $(g,K,S)$, with $g\in \Z^+$, $K/\Q$ totally real of odd degree, and $S$ a finite set of places of $K$, outputs the $g$-dimensional abelian varieties $A/K$ which are of $\GL_2$-type over $K$ and which have good reduction outside $S$.
\end{thm}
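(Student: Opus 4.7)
The approach follows the roadmap in the introduction: first establish potential modularity over an effectively computable finite list of odd-degree totally real extensions $L_i/K$; then use the Eichler--Shimura-style construction attached to parallel weight two Hilbert modular eigencuspforms on $L_i$ to obtain an effective Faltings-height bound; then enumerate.

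\smallskip
\noindent\emph{Potential modularity.} To any candidate $A$ is attached a compatible system of $2$-dimensional $\ell$-adic representations of $\Gal(\Qbar/K)$, unramified outside $S\cup\{\ell\}$, with Hodge--Tate weights $(0,1)$ and determinant of controlled shape. I would pick an auxiliary prime $\ell$ effectively in terms of $(g,K,S)$ and run a Moret-Bailly/Taylor-style argument: find a point, over an effectively bounded totally real extension $L/K$, on a moduli stack parametrizing pairs $(B,\iota)$ where $B$ is another $\GL_2$-type abelian variety and $\iota$ is a chosen congruence between $\rhobar_{A,\ell}$ and $\rhobar_{B,\ell}$. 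The moduli problem is arranged so that the point produced forces $\rhobar_{A,\ell}|_L$ to be modular --- either because $B$ comes with CM, or because after twist $\rhobar_{A,\ell}|_L$ factors through a soluble extension and Langlands--Tunnell applies. One then invokes modularity lifting over totally real fields (Skinner--Wiles, Kisin, Geraghty and their effective refinements) to upgrade to modularity of $A_L$ itself. Keeping track of the combinatorics, one may ensure $[L:\Q]$ remains odd and totally real; the finitely many possibilities that can arise form the required computable list $L_1,\ldots,L_r$.

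\smallskip
\noindent\emph{Height bound and enumeration.} Over each $L_i$ the Hilbert modular eigenform $f$ matching $A_{L_i}$ has parallel weight two and conductor dividing an effective function of $(g,K,S)$; and because $[L_i:\Q]$ is odd, there is a quaternion algebra over $L_i$ split at exactly one infinite place, whose associated Shimura curve admits an effectively computable integral model. The abelian variety $A_f/L_i$ appears as an isogeny factor of its Jacobian, with effective Faltings-height bound. An effective bound on the degree of the isogeny $A_{L_i}\sim A_f^d$ --- coming from bounding how many eigenforms can share Hecke eigenvalues at primes of small norm --- transfers this to $A_{L_i}$ and hence to $A$, since the height changes by a controlled factor under the bounded extension $L_i/K$. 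Given an effective Faltings-height bound, fixed dimension $g$, and good reduction outside $S$, one enumerates the finitely many candidate $A/K$ by fixing a polarization of effectively bounded degree, embedding into projective space of computable dimension, and searching among closed subschemes of bounded defining complexity with the correct reduction type.

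\smallskip
\noindent\emph{Main obstacle.} The delicate step is making \emph{potential residual modularity effective} while simultaneously forcing $[L:\Q]$ to be odd and totally real. Producing points on moduli of mod-$\ell$ congruences à la Moret-Bailly is effective in principle, but the combinatorics --- controlling the parity of $[L:\Q]$, controlling its ramification, and making sure the resulting residual representation matches a source (CM or soluble) one can prove modular effectively --- is the crux of the matter. Once this is secured, modularity lifting and the quantitative Eichler--Shimura construction in odd degree are standard and effective, and the enumeration is routine.
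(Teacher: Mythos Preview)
Your overall strategy matches the paper's, but there are two genuine gaps that you hand-wave past.

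\textbf{First gap: a finite list of extensions independent of $A$.} You pick $\ell$ ``effectively in terms of $(g,K,S)$'' and then run Moret--Bailly on a moduli stack depending on $\rhobar_{A,\ell}$. But this stack depends on $A$, so the resulting extension $L$ depends on $A$, and you have not explained why only finitely many $L$'s arise as $A$ varies. The paper handles this via its Lemma~\ref{irreducibility after some point}: for an effectively chosen prime $p$ (large in terms of $K$, $S$, and a fixed auxiliary $\ell$), the mod-$p$ residual representation of \emph{every} relevant $A$ has image sandwiched between $\SL_2(\F_q)$ and $\F^\times\cdot\GL_2(\F_q)$, and in particular lies in an explicit finite set $\Phi$ of representations depending only on $(g,K,S)$. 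One then runs the Moret--Bailly/Snowden argument once for each $\rhobar\in\Phi$, obtaining the finite list. Proving this large-image statement is itself nontrivial: it requires ruling out reducible and dihedral residual images, which the paper does by ``moving a congruence in a compatible family until it becomes an equality on a Faltings--Serre set'' (using the $\lambda$-adic representation at the fixed $\ell$ to contradict reducibility mod $p$). Your proposal contains no mechanism for this.

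\textbf{Second gap: forcing odd degree.} You write ``keeping track of the combinatorics, one may ensure $[L:\Q]$ remains odd,'' and then flag this as the main obstacle without resolving it. The paper's solution is clean and you should know it: the Moret--Bailly step produces totally real \emph{Galois} $L/K$ with no parity constraint. Then, if $H\subseteq\Gal(L/K)$ is a $2$-Sylow, Langlands' solvable descent for $\GL_2$ shows that modularity over $L$ implies modularity over $L^H$, and $[L^H:K]$ is odd (hence $[L^H:\Q]$ is odd since $[K:\Q]$ is). This is the entire reason the odd-degree hypothesis propagates through the argument, and it is not ``combinatorics'' --- it is a specific use of cyclic base change.

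A smaller point: your height-bound step (``bounding how many eigenforms can share Hecke eigenvalues'') is not how the paper proceeds and is murkier than necessary. The paper simply observes that $A$ is a $K$-isogeny factor of $B^{\times g}$ for $B$ the Jacobian of an explicit Shimura curve (using Lemma~\ref{reducibility lemma} to handle the non-simple case), and then applies the Masser--W\"ustholz/Gaudron--R\'emond isogeny estimate together with Bost's lower bound (Lemma~\ref{explicit height bound on isogeny factors}). No counting of eigenforms is needed.
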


Let $\o$ be an order in a totally real field $F/\Q$. Write $H_\o/\Q$ for the\footnote{We will use this sloppy notation (one needs slightly more than just the data of the ring $\o$ to produce a Hilbert modular variety) for convenience.} corresponding $\rk_\Z{\o} = [F:\Q]$-dimensional Hilbert modular variety and $\mathcal{H}_\o$ for its canonical integral model.\footnote{We are free to increase $S$ to avoid subtleties with the latter.} It follows that, at least for $K/\Q$ totally real of odd degree, we may compute $\mathcal{H}_\o(\o_{K,S})$ in finite time.\footnote{We are implicitly invoking the polarization estimates of Gaudron-R\'{e}mond (Theorem $1.1$ of their \cite{gaudron-remond}) and the endomorphism estimates of Masser-W\"{u}stholz (the main theorem of their \cite{masser-wustholz-endomorphism-estimate}, though an explicit such estimate can be extracted from Lemma $9.2$ of Gaudron-R\'{e}mond's \cite{gaudron-remond}) in order to deal with the $\o$-linear polarizations that are hidden by the notation.}

\begin{defn}\ \\\indent
Let $C/\Qbar$ be a smooth projective hyperbolic curve. $C/\Qbar$ is \emph{of type $\I$} if and only if it admits a nonconstant map to a Hilbert modular variety.

Let now $K/\Q$ be a totally real field. Let $C/K$ be a smooth projective hyperbolic curve. $C/K$ is \emph{of type $\dot{\I}$} if and only if it admits a nonconstant map defined over a totally real $L/K$ to a Hilbert modular variety.

Finally let $K/\Q$ be a totally real field of odd degree. Let $C/K$ be a smooth projective hyperbolic curve. $C/K$ is \emph{of type $\bigiy$} if and only if it admits a nonconstant map defined over an odd-degree totally real $L/K$ to a Hilbert modular variety.
\end{defn}

Because each $H_\o$ can be compactified to a projective variety by adding finitely many points, when $\o\neq \Z$ it follows that there are infinitely many complete curves with nonconstant maps to $H_\o$.\footnote{(Add level structure to produce curves rather than just stacky curves from this argument.)}  Indeed, by e.g.\ intersecting with general linear subspaces defined over $\Q$ of codimension $\rk_\Z{\o}-1$, it follows that there are curves $C/\Q$ of type $\bigiy$, so that these are nonempty notions.\footnote{There is moreover enough freedom in choosing $\o$, the implicit level structure, and the linear subspace to guarantee that $C(\Q)\neq \emptyset$. This comment is necessary because e.g.\ (canonical models of) Shimura curves associated to indefinite quaternion algebras over $\Q$ are tautologically of type $\bigiy$, since they map to Hilbert modular surfaces, but of course they lack real points, let alone rational points over odd-degree totally real fields.} We are not sure how to check if e.g.\ a given $C/\Qbar$ is \emph{not} of type $\I$.

The definition allows us to state the following.

\begin{cor}\label{the odd degree corollary}
There is a finite-time algorithm that, on input $(K,C/K)$ with $K/\Q$ totally real of odd degree and $C/K$ of type $\bigiy$, outputs $C(K)$.
\end{cor}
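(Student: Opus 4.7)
My strategy is the standard Parshin-style reduction, invoking Theorem \ref{the odd degree theorem} to control the moduli side. By hypothesis there is a witness consisting of an odd-degree totally real $L/K$, an order $\o$ in a totally real field $F$, and a nonconstant $L$-morphism $\phi: C_L \to H_\o$. I will take this witness to be part of the input; the effective production of such a map from the mere assertion that $C$ is of type $\bigiy$ is a subtle algorithmic point I set aside.

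First, enlarge a finite set $S$ of places of $L$ so that $C_L$ admits a smooth proper model $\mathcal{C}/\o_{L,S}$, so that the integral model $\mathcal{H}_\o$ is available over $\o_{L,S}$, and so that $\phi$ extends to $\Phi: \mathcal{C} \to \mathcal{H}_\o$. The places to be added are effectively determined by the discriminant of a model of $C$ and by the locus where $\phi$ fails to extend. Then any $P \in C(K) \subseteq C(L)$ extends by the valuative criterion to an element of $\mathcal{C}(\o_{L,S})$, so $\Phi(P) \in \mathcal{H}_\o(\o_{L,S})$. By the remark immediately following Theorem \ref{the odd degree theorem}, applied to the input $([F:\Q], L, S)$ (valid because $L$ is also odd-degree totally real), this set is finite and computable, via the polarization bounds of Gaudron-R\'emond and endomorphism bounds of Masser-W\"ustholz combined with the theorem itself.

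Finally, pull back: since $\phi$ is nonconstant from a smooth projective curve, it is finite onto its image, so each fiber $\Phi^{-1}(Q)(L)$ for $Q \in \mathcal{H}_\o(\o_{L,S})$ is the finite zero locus of an explicit polynomial system over $L$, hence computable (allowing for the mild bookkeeping that $H_\o$ is a stack, so $\Phi(P) \cong Q$ rather than $=Q$ — irrelevant once we rigidify by auxiliary level structure after enlarging $S$). The union of these fibers contains $C(K)$; filtering by checking which candidate $L$-points are in fact $K$-rational outputs $C(K)$.

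The main (and essentially only) heavy ingredient is Theorem \ref{the odd degree theorem} applied over $L$; every subsequent step is routine effective algebraic geometry. The one step I have not addressed is an effective enumeration of $\phi$ when it is not part of the input — bounds on the degree of $\phi$ in terms of the genus of $C$ and of the image curve give a theoretical enumeration, but this is genuinely delicate and in practice one assumes the map is supplied.
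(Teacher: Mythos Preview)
Your argument is correct and follows the same Parshin-style reduction as the paper. Two minor differences are worth noting. First, the paper asserts (without elaboration) that the witness map $\phi$ can be found in finite time by brute-force search over increasing $L$, $\o$, and degree, so your caveat about taking it as input is more cautious than necessary --- though the paper's own justification is admittedly terse. Second, your endgame enumerates $\mathcal{H}_\o(\o_{L,S})$ as a finite set and pulls back fibers, whereas the paper instead invokes the explicit comparison between the Faltings height $h(A_P)$ and the height pulled back from the Baily--Borel compactification to directly bound $h_D(P)$ for an explicit very ample divisor $D$ on $C$. The paper's height-based conclusion is what delivers the equivalent formulation (Corollary~\ref{the odd degree corollary, height version}) without a separate argument; your fiber-pullback version is equally valid but would need this extra translation step to recover the height statement.
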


This is completely equivalent to the following perhaps clearer reformulation.

\begin{cor}[Corollary \ref{the odd degree corollary}, height version.]\label{the odd degree corollary, height version}
Let $K/\Q$ be a totally real field of odd degree. Let $C/K$ be a curve of type $\bigiy$. Then: there is an effectively computable $\text{\c{s}}_{C,K}\in \Z^+$ depending only on $K$ and $C/K$ such that all $P\in C(K)$ satisfy: $$h(P)\leq \text{\c{s}}_{C,K}.$$
\end{cor}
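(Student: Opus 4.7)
My plan is to deduce this height version as a direct corollary of Theorem \ref{the odd degree theorem} via the moduli interpretation of the Hilbert modular variety $H_\o$. Let $f: C_L \to H_\o$ be a nonconstant map, defined over some odd-degree totally real $L/K$, witnessing that $C/K$ is of type $\bigiy$. Each $P\in C(K)\subset C(L)$ pushes forward to $f(P)\in H_\o(L)$, corresponding to an abelian variety $A_P/L$ of dimension $[F:\Q]$ (here $F = \Frac(\o)$) equipped with an $\o$-action; in particular $A_P$ is of $\GL_2$-type over $L$.

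The key reduction is to produce, effectively from $(K,C/K,L,f)$, a finite set $S_0$ of places of $L$ such that every $A_P$ has good reduction outside $S_0$. To do this I would choose a proper flat model $\mathcal{C}/\o_{L,S_0}$ of $C_L$ and an extension of $f$ to a morphism $\mathcal{C}\to \mathcal{H}_\o$, where $S_0$ is enlarged to contain the places of bad reduction of $C$, the places at which the extension of $f$ fails, and the places dividing the level implicit in $\mathcal{H}_\o$. Each $K$-rational $P$ extends uniquely to an $\o_{L,S_0}$-section of $\mathcal{C}$, whose image is an $\o_{L,S_0}$-section of $\mathcal{H}_\o$; by the moduli interpretation, together with the endomorphism and polarization estimates of Masser-W\"ustholz and Gaudron-R\'emond cited above, $A_P$ acquires good reduction outside $S_0$.

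Now apply Theorem \ref{the odd degree theorem} with parameters $([F:\Q], L, S_0)$; it outputs in finite time a finite list $\{A_1,\ldots,A_n\}$ of $\GL_2$-type abelian varieties over $L$ with good reduction outside $S_0$, exhausting the possibilities for $A_P$ up to $L$-isomorphism. Each $A_i$ admits only finitely many compatible $\o$-actions and $\o$-linear polarizations, hence corresponds to finitely many $L$-points of $H_\o$; since $f$ is nonconstant and $C$ is a curve, the fiber over each of these points is finite and effectively computable, so pulling back yields a finite subset of $C(L)$ containing $C(K)$, whose maximum Weil height in a fixed projective embedding of $C/K$ serves as $\text{\c{s}}_{C,K}$.

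The main obstacle I expect is the integrality step: explicitly producing $S_0$ from the raw data of $C$ and $f$ requires a concrete handle on $\mathcal{H}_\o$ as an arithmetic stack, along with a precise meaning for the ``extension'' of $f$. The cleanest route is to enlarge $S_0$ further so that one can impose auxiliary level structure, replacing $\mathcal{H}_\o$ by a fine moduli scheme away from $S_0$; then $f$ extends automatically by properness of $C$ and the N\'eron mapping property applied to the universal abelian variety place by place, and the Gaudron-R\'emond bounds control the finite ambiguity introduced by forgetting the level structure at the end. The rest is bookkeeping.
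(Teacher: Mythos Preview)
Your proposal is correct and follows essentially the same route as the paper: spread out $C$ and the map $f$ over $\o_{L,S_0}$ for an explicit $S_0$, use properness to make every $P\in C(K)\subseteq C(L)$ land in $\mathcal{H}_\o(\o_{L,S_0})$, and then invoke Theorem~\ref{the odd degree theorem}. The paper phrases the final step slightly differently --- rather than enumerating the finite output of Theorem~\ref{the odd degree theorem} and pulling back along $f$ fiber by fiber, it quotes the standard explicit comparison between the Faltings height $h(A_P)$ and the height pulled back from the Satake/Baily--Borel compactification of $H_\o$, which directly bounds $h_D(P)$ for an explicit ample $D$ on $C$ --- but this is a cosmetic difference, and your version is equally valid.
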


In other words, for such $C/K$, $C(L)$ is effectively computable for all odd-degree totally real extensions $L/K$.

We emphasize that this is the first time such an effective result has been proven. Even admitting the Chabauty hypothesis (which presumably fails for sufficiently large such $L/K$), etc., one does not know ahead of time that the upper bound on $\#|C(L)|$ computed by $p$-adic methods and suitable sieving will ever match the lower bound computed by searching for $L$-points of larger and larger height. On the other hand, it is certainly the case that existing methods (employing Chabauty, Chabauty-Kim, etc.) are far more practical than the algorithm implicit in Corollary \ref{the odd degree corollary} when they work (namely quite often).

We conclude with an example. Using Theorem \ref{the odd degree theorem} we prove the following.

\begin{thm}\label{the odd degree example}
Let $K/\Q$ be a totally real field of odd degree. Let $a\in K^\times$. Let $C_a : x^6 + 4y^3 = a^2$. Then: there is a finite time algorithm that, on input an odd-degree totally real extension $L/K$, outputs $C_a(L)$.
\end{thm}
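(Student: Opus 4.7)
The plan is to verify that $C_a/K$ is of type $\bigiy$ and then invoke Corollary \ref{the odd degree corollary}; on input an odd-degree totally real $L/K$, one applies that corollary to the base change $(L, C_a\times_K L)$, which inherits the type-$\bigiy$ structure from $C_a/K$.

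To exhibit the type-$\bigiy$ structure over $K$ I construct a non-isotrivial family of $\GL_2$-type abelian varieties over $C_a$, equivalently a nonconstant map $C_a \to H_\o$ defined over some odd-degree totally real $L_0/K$. The construction exploits the superelliptic symmetries of the defining equation $x^6 + 4y^3 = a^2$: the curve $C_a$ sits naturally as an abelian cover of $\P^1$ (e.g.\ via $(x,y)\mapsto y$) with Galois group of exponent dividing $6$, and there is a naturally associated family of curves (or abelian varieties) over $C_a$ whose relative $H^1$ decomposes into $\mu_6\times\mu_3$-isotypic components. A Deligne--Mostow-type analysis singles out a two-dimensional (over a coefficient field $F$) isotypic piece whose monodromy is the hypergeometric group alluded to in the introduction; this identifies $F$ as an explicit totally real field and shows the piece is the variation of Hodge structures of a family of $\GL_2(F)$-type abelian varieties, yielding a period map $C_a\to H_{\o_F}$. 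Non-isotriviality of this family is equivalent to non-triviality of the corresponding hypergeometric local system, a classical fact once the parameters are read off.

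Two arithmetic descent issues must then be handled. First, while the family descends to $K$ from its $K$-rational equation, the $\o_F$-module structure a priori lives over a larger Galois extension; however, since $F$ itself is totally real and the auxiliary cyclic automorphisms have orders dividing $6$ (with real subfields of $\Q(\zeta_3)$ and $\Q(\zeta_6)$ equal to $\Q$), the natural field of definition is totally real, and a further totally real odd-degree base change (e.g.\ adjoining a root of a suitable irreducible cubic) produces an $L_0/K$ of the required kind. Second, one must verify that the map $C_a\to H_{\o_F}$ is actually nonconstant on the whole of $C_a$ (as opposed to a proper subvariety), but this again reduces to irreducibility of the hypergeometric monodromy.

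The main obstacle is the explicit identification of the correct hypergeometric family: pinning down which cyclic cover of $\P^1$ to take cohomology of, extracting the precise isotypic component, identifying the real field $F$, and verifying that the induced period map to $H_{\o_F}$ is genuinely non-constant and defined over an odd-degree totally real extension. Once type-$\bigiy$ is established, Theorem \ref{the odd degree example} is immediate from Corollary \ref{the odd degree corollary}.
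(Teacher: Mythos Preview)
Your overall strategy---build a hypergeometric family over $C_a$ and extract a $\GL_2$-type piece---matches the paper's, but there is a genuine gap at the descent step. The Deligne--Mostow analysis of a $\mu_6$-cover produces a rank-two local system whose natural coefficient field is $\Q(\zeta_3)$, not a totally real field; indeed your own remark that the real subfields of $\Q(\zeta_3)$ and $\Q(\zeta_6)$ are $\Q$ shows that the cyclotomic symmetry alone contributes \emph{no} nontrivial real multiplication over $K$. So the family you describe is of $\GL_2(\Z[\zeta_3])$-type over $K(\zeta_3)$, and $K(\zeta_3)$ is never totally real (let alone odd-degree over $\Q$). Your handwave that ``the natural field of definition is totally real, and a further totally real odd-degree base change \ldots\ produces an $L_0/K$'' does not address this.

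What is actually needed---and what the paper supplies---is an \emph{extra} endomorphism, present because the triangle group $\Delta(3,6,6)$ is arithmetic: geometrically the Prym carries quaternionic multiplication by the indefinite $\Q$-quaternion algebra of discriminant $6$, and a real quadratic subfield thereof gives the desired $\GL_2$-type structure over $K$. The paper establishes this pointwise at each $P\in C_a(K)$ via a Frobenius-trace computation: one expresses $\tr\rho_{P,\pfrak}(\Frob_\qfrak)$ as a finite-field $_2F_1$, then applies a Hasse--Davenport identity together with the fact that $f_a(P)\cdot\bigl(\tfrac{1-f_a(P)}{4}\bigr)^2=(xy/a)^6$ is a sixth power on $C_a$ to force these traces into $\Z$, whence $\dim_\Q\End_K^0(A_{f_a(P)})\geq 2$. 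With this in hand the paper invokes Theorem~\ref{the odd degree theorem} directly on the fibers rather than passing through Corollary~\ref{the odd degree corollary}; either route works once the extra endomorphism is in place, but your proposal lacks this step entirely.
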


Again, this has the following completely equivalent reformulation.

\begin{thm}[Theorem \ref{the odd degree example}, height version.]\label{the odd degree example, height version}
Let $K/\Q$ be a totally real field of odd degree. Let $a\in K^\times$. Let $C_a : x^6 + 4y^3 = a^2$. Then: there is an effectively computable $\bighardsign_{K,a}\in \Z^+$ depending only on $K$ and $a$ such that all $P\in C_a(K)$ satisfy: $$h(P)\leq \bighardsign_{K,a}.$$
\end{thm}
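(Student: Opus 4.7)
The plan is to exhibit $C_a$ as a curve of type $\bigiy$ and then invoke Corollary \ref{the odd degree corollary, height version}.

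The first and principal step is to construct by hand a non-isotrivial family of abelian varieties of $\GL_2$-type over $C_a$, defined over a computable odd-degree totally real extension of $K$. The curve $C_a$ carries two commuting automorphisms --- $x\mapsto \zeta_6 x$ and $y\mapsto \zeta_3 y$, giving a $\mu_6\times \mu_3$-action --- which is exactly the symmetry exploited by the Deligne--Mostow construction of hypergeometric families of abelian varieties with real multiplication. Concretely, I would take a family of cyclic covers of $\P^1$ with moving branch locus built from the coordinates on $C_a$ (and the parameter $a$), cut out the appropriate isotypic component of the relative Jacobian under the induced cyclotomic action, and identify the resulting piece as a polarized abelian scheme carrying an action of an order $\o$ in a totally real subfield of $\Q(\zeta_6)$ or $\Q(\zeta_{12})$.

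Second, I would verify non-isotriviality of this family, which amounts to reading off the non-trivial hypergeometric monodromy from the branch data (or, more concretely, exhibiting two fibers with non-isomorphic $(\o,\text{polarization})$-structures). This produces a non-constant map $C_a \to H_\o$. Third, since $a \in K$ and the family is explicit, its coefficients lie in $K$, so after possibly enlarging $K$ to a computable odd-degree totally real extension $L/K$ to absorb any residual field of definition for the $\o$-action, the map $C_a \to H_\o$ is defined over $L$. At that point $C_a/K$ meets the definition of type $\bigiy$, and Corollary \ref{the odd degree corollary, height version} immediately produces the effective bound $\bighardsign_{K,a}$.

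The main obstacle is the explicit construction of the hypergeometric family and the simultaneous verification of the $\GL_2$-type property, the non-isotriviality, and the totally real descent. None of these is individually deep, but pinning down exactly which cyclic cover and which isotypic component of its Jacobian should be used, and certifying that its $\o$-structure descends, is where the concrete work lies. Once that construction is recorded, the remainder of the proof is a direct appeal to Corollary \ref{the odd degree corollary, height version}, with Theorem \ref{the odd degree theorem} doing all of the Diophantine heavy lifting in the background.
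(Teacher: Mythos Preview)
Your overall strategy---build a non-isotrivial hypergeometric family of $\GL_2$-type abelian varieties over $C_a$ and feed it into Theorem \ref{the odd degree theorem}---is exactly the paper's. The paper uses the specific family of Pryms of $y^6 = x^4(1-x)^3(1-\lambda x)$ pulled back along $\lambda = x^6/a^2$, checks the extension across $\lambda \in \{0,1,\infty\}$ in an appendix, and then applies Theorem \ref{the odd degree theorem} directly rather than passing through Corollary \ref{the odd degree corollary, height version}. So far your sketch matches.

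The gap is in your third step. The isotypic decomposition of a cyclic-cover Jacobian gives complex multiplication by $\Z[\zeta_3]$ defined over $K(\zeta_3)$, which is CM, not totally real; the totally real subfield of $\Q(\zeta_6)$ is just $\Q$, which gives nothing. There is no odd-degree totally real $L/K$ that ``absorbs'' this field of definition, so the sentence ``possibly enlarging $K$ to a computable odd-degree totally real extension $L/K$'' does not work as written. What actually rescues the argument---and what you have not anticipated---is that the relevant triangle group $\Delta(3,6,6)$ is \emph{arithmetic}, so the abelian surfaces secretly carry quaternionic multiplication by (an order in) the indefinite quaternion algebra $B_6/\Q$ of discriminant $6$. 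The paper establishes this at the level of Galois representations by computing Frobenius traces as finite-field hypergeometric sums and applying the Hasse--Davenport relation to show $\tr\rho_{P,\pfrak}(\Frob_\qfrak) \in \Z$; the crucial arithmetic input is the identity $f_a(P)\cdot\bigl(\tfrac{1-f_a(P)}{4}\bigr)^2 = (xy/a)^6$, a sixth power in $K$. This forces $\dim_\Q \End_K^0(A_{f_a(P)}) \geq 2$ over $K$ itself, whence $\GL_2$-type over $K$ with no extension needed. That computation is the heart of the proof, and your outline treats it as a routine descent when it is in fact the one genuinely non-formal step.
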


We again emphasize that this is the first time such an effective form of the Mordell conjecture has been proven (note that, given an odd-degree totally real $a\in \Qbar$, the theorem bounds the heights of points in $C_a(L)$ for infinitely many number fields $L/\Q$). If one could remove the phrase "odd-degree totally real" from Theorem \ref{the odd degree example} (i.e.\ in the case of the single curve $x^6 + 4y^3 = 1$ if one could allow all number fields $L/\Q$), by a theorem of Poonen \cite{poonen} generalizing work of Bogomolov-Tschinkel \cite{bogomolov-tschinkel} one would have an effective form of Faltings' theorem for all solvable covers of $\P^1$ over all number fields.\footnote{\label{etale cover footnote}See e.g.\ Chapter $8$ of the author's thesis \cite{my-thesis}. Over $\C$ the curve $x^6 + 4y^3 = 1$ corresponds to the commutator subgroup (which has index $18$) of the arithmetic triangle group $\Delta(3,6,6)$. Similarly, and again over $\C$, the curve $y^2 = x^6 + 1$ used in the aforementioned theorems of Poonen/Bogomolov-Tschinkel and in Chapter $8$ of the author's thesis \cite{my-thesis} corresponds to the commutator subgroup (which has index $12$) of the arithmetic triangle group $\Delta(2,6,6)$. The point is that $\Delta(3,6,6)$ is an index-$2$ subgroup of $\Delta(2,6,6)$, so that, at least over $\Qbar$, there is an \'{e}tale cover of $y^2 = x^6 + 1$ by $x^6 + 4y^3 = 1$ of degree $3 = \frac{2\cdot 18}{12}$. Explicitly, $x^6 + 4y^3 = 1$ can be rewritten as $$\left(x^3 + \frac{2y^3}{x^3}\right)^2 = 4\cdot \left(\frac{y}{x}\right)^6 + 1$$ (just complete the square).}

\subsection{Remarks.}

First, when $K = \Q$ and $g = 1$, Theorem \ref{the odd degree theorem} was proven in explicit form by Murty-Pasten \cite{murty-pasten}. Independently and simultaneously, von K\"{a}nel \cite{von-kanel} proved the same theorem with only the restriction that $K = \Q$, again in explicit form. Were Serre's conjecture known over all e.g.\ odd-degree totally real fields, von K\"{a}nel's argument would easily prove Theorem \ref{the odd degree theorem} in explicit form. So our key insight in proving Theorem \ref{the odd degree theorem} is really in how to get away with using only potential modularity theorems rather than the modularity theorems von K\"{a}nel uses over $\Q$.

Though von K\"{a}nel never observes that one can bound heights of \emph{rational} points on curves by using his main theorem, his bounds on heights of $\GL_2$-type abelian varieties over $\Q$ of given conductor do indeed produce (explicit!) height bounds on $P\in C(\Q)$ for smooth projective hyperbolic curves $C/\Q$ admitting a nonconstant map defined over $\Q$ to a Hilbert modular variety. This observation, namely that there are complete curves in these moduli spaces, and thus curves whose rational points are (unconditionally!) controlled by explicit automorphic forms, was the origin of this work --- note, though, that the former point is also observed in recent independent work of Baldi-Grossi \cite{baldi-grossi} about obstructions to rational points on these moduli spaces.

We believe our use of hypergeometric families associated to \emph{cocompact} triangle groups $\Delta(e_0,e_1,e_\infty)$ (our example in Section \ref{the example section} amounts to such a use for the arithmetic triangle group $\Delta(3,6,6)$) for these Diophantine questions is new --- see Chapter $10$ (and specifically Section $10.2.1$) of the author's thesis \cite{my-thesis} for a more detailed discussion which includes the nonarithmetic triangle groups and uses the new potential modularity theorems \cite{ten-author-paper} over CM fields.

It should be clear that, assuming suitable modularity \emph{and} existence-of-abelian-variety conjectures for Galois representations valued in $\GSp_{2g}(\Q_\ell)$, one can prove in the same way that there is a finite-time algorithm that determines the $S$-integral $K$-points on the Siegel modular variety, i.e.\ that such conjectures imply an effective form of the Shafarevich conjecture. In a similar vein, the main result of \cite{paper-with-brian}\footnote{See also Chapter $7$ of \cite{my-thesis}, which is based on the same work.} implies that the effective Shafarevich conjecture (and thus an effective form of the Mordell conjecture) also follows from standard motivic conjectures (Fontaine-Mazur, Hodge, and Tate). In fact making this latter work unconditional in some cases was the main motivation for this work.

\section{Acknowledgements.}
This article is based on Chapters $9$ and $11$ of the author's Ph.D.\ thesis at Princeton University. I would like to thank both my advisor Manjul Bhargava and Peter Sarnak for their patience and encouragement. I would especially like to thank Professor Sarnak for a discussion which left me determined to find an unconditional result like Theorem \ref{the odd degree theorem}.

I would also like to thank Gurbir Dhillon, Mladen Dimitrov, Tony Feng, Brian Lawrence, Curtis McMullen, Christopher Skinner, Jacob Tsimerman, Akshay Venkatesh, Raphael von K\"{a}nel, and Shou-Wu Zhang for informative discussions.

I thank the National Science Foundation (via their grant DMS-$2002109$) and Columbia University for their support during the pandemic.

\section{Preliminaries.}

We will use the following mostly standard results in the proofs of the main theorems.

\subsection{Faltings' Lemma.}

The first such result is the usual form of Faltings' Lemma (save perhaps the observation that it also works for the rings $\o/\lambda^N$).
\begin{lem}[Faltings, see Satz $5$ of his \cite{faltings}]\label{faltings' lemma}
Let $d\in \Z^+$. Let $N\in \Z^+$. Let $K/\Q$ be a number field and $S$ a finite set of places of $K$. Let $\o$ be an order in the ring of integers of a number field and $\lambda$ be a prime of $\o$ with $\Nm\,{\lambda}\leq N$. Let $T_{K,S,d,N}$ be a finite set of primes of $K$ that are prime to $\Nm\,{\lambda}$ which is disjoint from $S$ such that, for all Galois extensions $L/K$ that are unramified outside $S$ and the primes dividing $\Nm\,{\lambda}$ and of degree $[L:K]\leq N^{2d^2}$, the map $T_{K,S,d,N}\to \Gal(L/K)/\text{conj.}$ via $\pfrak\mapsto \Frob_\pfrak$ is surjective. Let $R := \o_\lambda$ or $\o/\lambda^m$ for some $m\in \N$. Let $\rho, \rho': \Gal(\Qbar/K)\to \GL_d(R)$ be unramified outside $S$ and the primes dividing $\Nm\,{\lambda}$ and such that $\tr(\rho(\Frob_\pfrak)) = \tr(\rho'(\Frob_\pfrak))$ for all $\pfrak\in T_{K,S,d,N}$.

Then: $\tr\circ \rho = \tr\circ \rho'$ on $\Gal(\Qbar/K)$.
\end{lem}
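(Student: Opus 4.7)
The plan is to reduce to Artinian coefficients $R=\o/\lambda^m$ via inverse limits (since $\o_\lambda = \varprojlim_m \o/\lambda^m$ and the trace identity propagates through this limit), and then to run a Nakayama-type argument on an $R$-subalgebra of $M_d(R)\oplus M_d(R)$, invoking the Chebotarev hypothesis only at the level of the mod-$\lambda$ extension.

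Concretely, I would let $M\subseteq M_d(R)\oplus M_d(R)$ be the $R$-subalgebra generated by the image of $\rho\oplus\rho'$; since $\rho\oplus\rho'$ is a group homomorphism, $M$ is in fact the $R$-span of $\{(\rho(\sigma),\rho'(\sigma)):\sigma\in\Gal(\Qbar/K)\}$, and the reduction $M\otimes_R k$ (with $k=\o/\lambda$) is a $k$-subalgebra of $M_d(k)\oplus M_d(k)$, so $\dim_k M\otimes_R k\leq 2d^2$. Nakayama then supplies an $R$-module generating set of $M$ of size at most $2d^2$. Next, let $L/K$ be the Galois extension cut out by $\bar\rho\oplus\bar\rho'\pmod\lambda$; then $[L:K]\leq|\GL_d(k)|^2\leq(\Nm\lambda)^{2d^2}\leq N^{2d^2}$, and $L/K$ is unramified outside $S$ and the primes above $\Nm\lambda$, so the hypothesis applies: the Frobenii $\Frob_\pfrak$ for $\pfrak\in T_{K,S,d,N}$ surject onto $\Gal(L/K)/\text{conj.}$ and in particular generate $\Gal(L/K)$ as a group.

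Modulo $\lambda$ the conclusion is then immediate: $\tr\bar\rho-\tr\bar\rho'$ is a class function on $\Gal(L/K)$ which vanishes on each Frobenius in $T_{K,S,d,N}$ by hypothesis, hence on every conjugacy class of $\Gal(L/K)$, hence identically. The heart of the proof is the lift from mod $\lambda$ to mod $\lambda^m$. The strategy is to exploit the $R$-algebra structure of $M$: the $R$-subalgebra $M_T\subseteq M$ generated by $\{(\rho(\Frob_\pfrak),\rho'(\Frob_\pfrak)):\pfrak\in T_{K,S,d,N}\}$ has the same mod-$\lambda$ reduction as $M$ (since the Frobenii generate $\Gal(L/K)$), and hence, by Nakayama applied to the finitely generated $R$-module $M$, satisfies $M_T=M$. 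The $R$-linear functional $\phi:M\to R,\ (A,B)\mapsto \tr A-\tr B$, vanishes on the given Frobenii by hypothesis; combined with the Cayley--Hamilton-type identities that traces of $d$-dimensional representations satisfy (which rigidify $\phi$ on products), one deduces $\phi\equiv 0$ on $M_T=M$, yielding $\tr\rho=\tr\rho'$ on $\Gal(\Qbar/K)$.

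The main technical obstacle is this last step: $M$ is not generally $\lambda$-saturated inside $M_d(R)\oplus M_d(R)$, so congruences in the ambient free module do not automatically descend to $M$, and one must carefully track the interaction between the algebra structure of $M$ and the $R$-linear functional $\phi$. Faltings handles this in the original via an explicit finite-step induction on powers of $\lambda$ exploiting the group-algebra structure of $M$ and the rigidity of $d$-dimensional traces; equivalently, one can encapsulate the argument in the language of pseudocharacters: $\tr\rho$ and $\tr\rho'$ correspond to two maps $R^{ps}\to R$ from the universal pseudo-deformation ring of $\tr\bar\rho^{ss}$, and the mod-$\lambda$ Chebotarev hypothesis shows that $R^{ps}$ is topologically generated over $\o_\lambda$ by the universal traces at Frobenii in $T_{K,S,d,N}$, so that the two maps, agreeing on generators, must coincide.
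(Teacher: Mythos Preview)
Your overall architecture matches the paper's: form the $R$-span $M$ of the image of $\rho\oplus\rho'$ inside $M_d(R)^{\times 2}$, bound $\dim_k M/\lambda M\leq 2d^2$, and use Nakayama together with the Chebotarev hypothesis. But you have made the endgame much harder than necessary by working with \emph{individual} Frobenius elements and the $R$-\emph{subalgebra} they generate. The paper instead takes the full Frobenius \emph{conjugacy classes} $\Frob_\pfrak\subseteq\Gal(\Qbar/K)$: since the map $\Gal(\Qbar/K)\to(M/\lambda M)^\times$ factors through a group of order at most $|M/\lambda M|\leq(\Nm\lambda)^{2d^2}\leq N^{2d^2}$, the hypothesis guarantees that $\bigcup_{\pfrak}(\rho\oplus\rho')(\Frob_\pfrak)$ already surjects, \emph{as a set}, onto the image of $\Gal(\Qbar/K)$ in $M/\lambda M$; by Nakayama its $R$-\emph{span} (not subalgebra) is therefore all of $M$. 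Now the $R$-linear functional $\phi:(A,B)\mapsto\tr A-\tr B$ vanishes on every element of every $(\rho\oplus\rho')(\Frob_\pfrak)$ because trace is a class function, hence on their $R$-span $M$ by pure linearity, and you are done. No pseudocharacters, no Cayley--Hamilton, no induction on powers of $\lambda$.

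Two smaller slips worth flagging. First, your assertion that ``$M\otimes_R k$ is a $k$-subalgebra of $M_d(k)\oplus M_d(k)$'' is precisely the saturation problem you yourself raise later: the natural map $M/\lambda M\to M_d(k)^{\times 2}$ need not be injective when $R=\o/\lambda^m$. The bound $\dim_k M/\lambda M\leq 2d^2$ is nonetheless true, but for a different reason --- the paper lifts $M$ to a submodule of the free $\o_\lambda$-module $\o_\lambda^{\oplus 2d^2}$, which is then free of rank $\leq 2d^2$, and reduces mod $\lambda$ (equivalently, a $\Tor_1$ computation). Second, and relatedly, the extension $L$ you cut out via $\bar\rho\oplus\bar\rho'$ may be strictly smaller than the extension $L'$ cut out by $\Gal(\Qbar/K)\to(M/\lambda M)^\times$; it is $L'$ that governs whether the Frobenii span $M/\lambda M$, so ``the Frobenii generate $\Gal(L/K)$'' is not quite the right justification. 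The same degree bound $[L':K]\leq N^{2d^2}$ holds, so the repair is painless.
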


\begin{proof}
The standard proof extends upon noting that, for $M\subseteq R^{\oplus n}$, $\#|M/\lambda|\leq \#|R/\lambda|^n$, as can be seen\footnote{Brian Lawrence points out a far more elegant proof: take the preimage $\tilde{M}$ of $M$ under the evident surjection $\o_\lambda^{\oplus n}\surj R^{\oplus n}$ and then run the usual argument: $\tilde{M}$ is a free $\o_\lambda$-module of rank $\leq n$, now reduce mod $\lambda$, QED.} by e.g.\ computing the $\Tor_1$.

Namely, it evidently suffices to show that the $R$-span of $\im{(\rho\oplus \rho': \Gal(\Qbar/K)\to \GL_d(R)^{\times 2})}$ inside $M_d(R)^{\times 2}$ is in fact spanned by $\bigcup_{\pfrak\in T_{K,S,d,N}} (\rho\oplus \rho')(\Frob_\pfrak)$, where $\Frob_\pfrak\subseteq \Gal(\Qbar/K)$ is the Frobenius conjugacy class of $\pfrak$. To do this one uses Nakayama to reduce mod $\lambda$, after which it follows from the hypothesis on $T_{K,S,d,N}$.
\end{proof}

Note that the usual explicit form of the Chebotarev density theorem gives an explicit $T_{K,S,d,N}$ satisfying the above --- thus e.g.\ we may take $T_{K,S,d,N}$ satisfying the above and so that all $\pfrak\in T_{K,S,d,N}$ satisfy $\Nm\,{\pfrak}\ll_{K,S,d,N} 1$, where the implied constant is explicit.

\subsection{An explicit bound on the height of an isogeny factor of an abelian variety.}

In this section we will prove the following lemma.

\begin{lem}\label{explicit height bound on isogeny factors}
Let $g\in \Z^+$. Let $H\in \Z^+$. Let $N\in \Z^+$. Let $K/\Q$ be a number field. Then: there is an explicit (thus effectively computable) constant $C_{g,K,N,H}\in \Z^+$, depending explicitly and only on $g$, $K$, $N$, and $H$, such that the following holds:
\begin{itemize}
\item Let $B/K$ be an abelian variety with $\dim{B}\leq g$, $h(B)\leq H$, and with good reduction outside the primes of $K$ dividing $N$. Let $A/K$ be an abelian variety with good reduction outside the primes of $K$ dividing $N$ and such that $A/\Qbar$ is a $\Qbar$-isogeny factor of $B/\Qbar$. Then: $h(A)\leq C_{g,K,N,H}$.
\end{itemize}
\end{lem}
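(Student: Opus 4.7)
The plan is to produce an effective $L$-isogeny $\phi: B_L \to A_L \times A''$ over a controlled extension $L/K$, and then to combine Faltings' isogeny inequality with base-change invariance and additivity of the stable Faltings height $h$ together with Bost's lower bound on $h$. I would use the stable Faltings height throughout, so that $h(B_L) = h(B) \le H$ and $h(A_L \times A'') = h(A) + h(A'')$ without any corrections.

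\emph{First,} I would pass to a finite extension $L/K$ over which the $\Qbar$-isogeny decomposition $B \sim A \times A'$ descends to $L$. By Silverberg's theorem on fields of definition of homomorphisms of abelian varieties, one may take $L := K(A[3], B[3])$, so that $[L:K] \le |\GL_{2g}(\F_3)|^2$, a bound depending only on $g$. Over $L$, Poincar\'e reducibility (applied to the $L$-rational projection idempotent in $\End(B_L)\otimes \Q$) produces an $L$-abelian variety $A''/L$ and an $L$-isogeny $\phi_0: B_L \to A_L \times A''$ of a priori uncontrolled degree.

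\emph{Second,} I would bound the minimal degree of such an isogeny using the effective Masser-W\"ustholz isogeny theorem (or the sharper Gaudron-R\'emond form already invoked earlier in this paper): any two $L$-isogenous abelian varieties of dimension at most $g'$ over $L$ admit an $L$-isogeny of degree polynomial in $\max(h, \log[L:\Q])$ and depending effectively on $g'$. Applied to $B_L$ and $A_L \times A''$, and using $h(B_L) = h(B) \le H$ together with $[L:\Q] \le |\GL_{2g}(\F_3)|^2 \cdot [K:\Q]$, this yields an $L$-isogeny $\phi$ of degree at most some effective $D = D(g, K, H)$.

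\emph{Third,} I would apply Faltings' isogeny inequality $|h(X) - h(Y)| \le \tfrac{1}{2}\log\deg\phi$ for $L$-isogenous $X, Y$, the additivity $h(A_L \times A'') = h(A) + h(A'')$, and Bost's effective lower bound $h(A'') \ge -c_g$, to obtain
$$h(A) \le H + \tfrac{1}{2}\log D(g,K,H) + c_g,$$
which is an effective constant $C_{g,K,N,H}$ of the required form.

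The main obstacle will simply be to apply each of these four ingredients---Silverberg's descent, Masser-W\"ustholz's (or Gaudron-R\'emond's) isogeny theorem, Faltings' isogeny inequality, and Bost's lower bound---in its effective form with explicit constants. Each admits a well-documented effective version in the literature, so the proof is a matter of careful bookkeeping rather than a genuinely new estimate; the dependence on $N$ enters only implicitly, through the places of potentially bad reduction allowed in the isogeny estimate.
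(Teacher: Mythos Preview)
Your proposal is correct and follows essentially the same route as the paper: pass to a controlled extension where the geometric isogeny factor becomes a rational one (Silverberg), split $B$ as $A\times A''$ via Poincar\'e reducibility, bound the isogeny degree via Gaudron--R\'emond/Masser--W\"ustholz, then combine Faltings' height inequality with Bost's lower bound on $h(A'')$.

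The only (harmless) difference is in the choice of extension. The paper fixes a single explicit $K'/K$ depending on $g$, $K$, and $N$ (via Hermite--Minkowski, as the compositum of all bounded-degree extensions unramified outside the bad primes) that works uniformly for every such $A,B$; you instead take $L=K(A[3],B[3])$, which depends on $A$ and $B$ but has degree bounded by $|\GL_{2g}(\F_3)|^2$ independent of $N$. Since the Gaudron--R\'emond isogeny estimate depends only on the dimension, the degree $[L:\Q]$, and $h(B)$ (not on the places of bad reduction), your final bound actually does not depend on $N$ at all --- your closing remark that $N$ enters ``through the places of potentially bad reduction allowed in the isogeny estimate'' is misplaced, but this only makes your conclusion slightly stronger than stated.
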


The lemma will follow easily from combining three standard facts. We note that the argument we give already occurs in von K\"{a}nel's \cite{von-kanel}.

The first standard fact is the Masser-W\"{u}stholz isogeny estimate, in the explicit form given in Gaudron-R\'{e}mond's \cite{gaudron-remond} (Raynaud's isogeny estimate \cite{raynaud-hauteurs-et-isogenies} would work just as well).

\begin{thm}[Gaudron-R\'{e}mond, see Theorem $1.4$ of their \cite{gaudron-remond}]\label{masser wustholz bound}
Let $A,A'/K$ be $K$-isogenous abelian varieties over a number field $K$. Write $g := \dim{A}$. Then: there is a $K$-isogeny $\phi: A\to A'$ of degree $$\deg{\phi}\leq \left((14g)^{64g^2}\cdot [K:\Q]\cdot \max(h(A), \log{[K:\Q]}, 1)^2\right)^{2^{10} g^3} =: \kappa(A),$$ where $h(A)$ is the Faltings height of $A$ using Faltings' original normalization.

Consequently $|h(A') - h(A)|\leq \frac{1}{2}\log{\kappa(A)}$.
\end{thm}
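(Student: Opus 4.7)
The plan is to follow the Masser--W\"{u}stholz isogeny method, sharpened by Gaudron--R\'{e}mond to produce the stated explicit constants. First I would fix an embedding $\sigma : K \hookrightarrow \C$ and complex-uniformize $A^\sigma(\C) = \C^g/\Lambda$ and $A'^\sigma(\C) = \C^g/\Lambda'$, identifying tangent spaces so that $\Lambda\otimes \Q = \Lambda'\otimes \Q$. Finding a $K$-isogeny $\phi: A\to A'$ of small degree then amounts to finding a sublattice of $\Lambda$ of small index that is contained in $\Lambda'$ and, crucially, is cut out algebraically (i.e.\ comes from the kernel of a morphism of abelian varieties defined over $K$, not merely an analytic linear map).

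The central technical input is the \emph{period theorem} of Masser--W\"{u}stholz: fixing a polarization on $A$ induces a Hermitian form $H$ on $\C^g$, and the theorem produces a nonzero period in $\Lambda$ of $H$-norm polynomially controlled by $h(A)$, $[K:\Q]$, and $g$. The proof is itself a transcendence argument, which I would carry out in the usual three-step format. Step one: via Siegel's lemma, construct a nonzero auxiliary section of a suitable high tensor power of the polarization line bundle that vanishes to prescribed order along $\Z$-multiples of a candidate short period; here one must control the arithmetic complexity of the coefficients, and this is where the $h(A)$-dependence enters. Step two: bound the size of this section at a further analytic point by the Schwarz lemma, obtaining that it must be small. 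Step three: apply a zero estimate on abelian varieties of Philippon / Masser--W\"{u}stholz type to conclude that unless a genuinely short period existed, the section would have to vanish along a positive-dimensional abelian subvariety, contradicting its construction. One then iterates this by quotienting by the short period found, reducing to an abelian variety of smaller dimension, until an honest sublattice of $\Lambda$ of bounded index realizing $\phi$ has been produced.

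The hard part, and the one Gaudron--R\'{e}mond actually execute, is making every constant in Siegel's lemma, the period theorem, and the zero estimate explicit with the sharp polynomial shape in $h(A)$ and $[K:\Q]$ and the stated exponential-in-$g^3$ dependence; their sharpening over the original Masser--W\"{u}stholz version goes via the slope method, a systematic Arakelov-theoretic tool replacing the classical Siegel-lemma/interpolation-determinant bookkeeping with clean Hermitian vector bundle inequalities. Granted the bound $\deg\phi \leq \kappa(A)$, the height comparison is standard: Faltings' original computation of how the stable Faltings height transforms under a $K$-isogeny of degree $d$ yields $|h(A') - h(A)| \leq \frac{1}{2}\log d$, because the discrepancy is controlled by the logarithm of the order of $\ker\phi$ (a finite flat group scheme of order $d$) and by a dual contribution that enters symmetrically; applying this with $d \leq \kappa(A)$ gives the claimed $\frac{1}{2}\log\kappa(A)$ bound.
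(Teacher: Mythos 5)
The paper does not prove this statement at all: it is imported verbatim as Theorem 1.4 of Gaudron--R\'{e}mond and used as a black box, so there is no internal proof to compare yours against. Judged on its own terms, your outline correctly identifies the architecture of the actual proof in the literature: the engine is the Masser--W\"{u}stholz period theorem (a transcendence argument with an auxiliary construction, an analytic smallness step, and a zero estimate on abelian varieties of Philippon/Masser--W\"{u}stholz type), the explicit constants with the stated $g$- and $[K:\Q]$-dependence are exactly what Gaudron--R\'{e}mond extract via Bost's slope method, and the height consequence follows from Faltings' formula for the change of height under an isogeny of degree $d$, namely $h(A') = h(A) + \tfrac{1}{2}\log d - \log\#\,s^*\Omega^1_{\ker\phi}$ together with the fact that $\#\,s^*\Omega^1_{\ker\phi}$ divides $d = \#\ker\phi$, which gives the two-sided bound $|h(A')-h(A)|\le\tfrac{1}{2}\log d$ (your ``dual contribution'' phrasing is a slightly different but also standard route, via $h(A)=h(A^\vee)$). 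Two schematic points in your middle paragraphs are not quite how the argument is actually organized: the isogeny theorem is not proved by directly hunting for a small-index sublattice of $\Lambda$ inside $\Lambda'$, but by applying the period theorem to the product $A\times A'$ and bounding the degree of a minimal abelian subvariety (essentially the graph of an isogeny) with surjective projections; and the induction is on such subvarieties rather than on ``quotienting by the short period.'' These are presentation-level inaccuracies in a sketch, not gaps in the cited theorem, and none of this affects the paper, which only needs the statement as quoted.
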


The second standard fact is Bost's lower bound for the Faltings height of an abelian variety in terms of its dimension.

\begin{thm}[Bost, Gaudron-R\'{e}mond (see Corollary $8.4$ and then paragraph $2.3$ of their \cite{gaudron-remond-periods})]\label{bost lower bound}
Let $A/\Qbar$ be an abelian variety. Then: $$h(A)\geq -\frac{\log{(2\pi^2)}}{2}\cdot \dim{A}.$$
\end{thm}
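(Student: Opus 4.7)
The plan is to express the Faltings height as the (normalized) Arakelov degree of the Hodge bundle and then extract the lower bound from an explicit archimedean estimate. First I would fix a number field $K/\Q$ over which $A$ is defined, and, after passing to a finite extension (which preserves the stable Faltings height), assume $A$ has semistable reduction over $\o_K$, so that $h(A) = \frac{1}{[K:\Q]}\widehat{\deg}(\pi_* \Omega^g_{\mathcal{A}/\o_K})$ with the Faltings metric at infinite places. Choosing a nonzero generating rational section $\omega$ of this line bundle, I can then decompose $h(A)$ as a sum of non-negative non-archimedean contributions (from the zeros of $\omega$ at finite places, which can be absorbed since we are only after a lower bound) together with $-\frac{1}{[K:\Q]} \sum_{v \mid \infty} \log \|\omega\|_v$, so it suffices to give an upper bound on each $\|\omega\|_v$.

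Next I would reduce to the principally polarized case, either by Zarhin's trick (the variety $A^4 \times \hat{A}^4$ is always principally polarizable, and Faltings heights scale compatibly with dimension) or by fixing any available polarization and keeping its degree in the bookkeeping. With a principal polarization in hand, at each archimedean place I can identify $A_v(\C) \cong \C^g/\Lambda_v$ with period matrix $\tau_v$ in the Siegel upper half-space, and the Faltings norm $\|\omega\|_v$ becomes an explicit function of $\det(\Im \tau_v)$ together with the normalization constants built into the definition of the Faltings metric.

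The core archimedean estimate is then the main step: one uses Riemann's theta function as a canonical global section of the polarization line bundle, together with the normalized pointwise bound $\sup_z \|\theta(z, \tau_v)\|_{\mathrm{Fal}} \leq 1$, and applies Jensen's inequality in the Arakelov framework of Gillet--Soul\'{e} to compare $\|\omega\|_v$ against this sup-norm. Alternatively, one can argue directly via Minkowski-type lower bounds on the shortest period of $(\mathcal{A}_v, H_v)$ in the Hermitian metric coming from the polarization, which is exactly the route taken in Gaudron--R\'{e}mond \cite{gaudron-remond-periods}. Either way the archimedean contribution at each $v$ is bounded below by $-\tfrac{g}{2}\log(2\pi^2)$, and summing over $v \mid \infty$ and dividing by $[K:\Q]$ gives the claimed inequality.

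The main obstacle is extracting the \emph{sharp} constant $\tfrac{\log(2\pi^2)}{2}\cdot \dim A$ rather than just some effective lower bound: this requires careful matching of conventions, since the $(2\pi)^{-2g}$ factor in the definition of the Faltings metric on $\|\omega\|^2$, the $(2\pi i)^g$ arising in invariant differentials, and the canonical metric on the theta line bundle must all be normalized consistently. This bookkeeping is precisely what is carried out in Bost's original work and refined by Gaudron--R\'{e}mond; my plan would be to invoke their Corollary $8.4$ at the final step once the structural framework above is in place, rather than redo the explicit constant-chasing.
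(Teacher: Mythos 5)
The paper offers no proof of this statement: it is imported directly from the literature, with the constant taken from Corollary $8.4$ of Gaudron--R\'{e}mond \cite{gaudron-remond-periods} together with the normalization comparison in their paragraph $2.3$. Since your sketch, after outlining the standard structure of Bost's argument (Arakelov degree of the metrized Hodge bundle, nonnegative finite contributions for an integral section, archimedean estimate via theta/period bounds), ultimately invokes that same Corollary $8.4$ for the sharp constant, you are taking essentially the same route as the paper, and your outline is a fair account of how the cited proof goes.
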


The third standard fact is a theorem of Silverberg producing an explicit extension over which all endomorphisms of a given abelian variety (and, consequently, its geometric isogeny decomposition) are defined.

\begin{thm}[Silverberg, Grothendieck]\label{everything happens over an explicit finite extension}
Let $g\in \Z^+$. Let $K/\Q$ be a number field. Let $S$ be a finite set of places of $K$. Then: there is an explicit finite Galois extension $K'/K$ depending only on $g$, $K$, and $S$ such that the following holds:
\begin{itemize}
\item Let $A/K$ be a $g$-dimensional abelian variety over $K$ with good reduction outside $S$. Then: $A/K'$ is split semistable and $\End_{K'}(A) = \End_{\Qbar}(A)$.

Consequently its $K'$-isogeny decomposition $A\sim_{K'} \prod_i B_i^{\times n_i}$ into $K'$-simple pairwise non-$K'$-isogenous $B_i/K'$ is such that all $B_i/\Qbar$ are $\Qbar$-simple and pairwise non-$\Qbar$-isogenous.
\end{itemize}
\end{thm}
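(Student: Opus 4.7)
The plan is to take $K'$ to be an explicit compositum of Galois extensions of $K$ of bounded degree and prescribed ramification, and verify that it does the job for every $A$ in our family simultaneously. Fix a rational prime $\ell\geq 3$, set $N_g := |\GL_{2g}(\F_\ell)|$, and let $S' := S\cup\{v : v\mid \ell\}$. By Hermite-Minkowski (together with Minkowski's discriminant bound), the set of Galois extensions of $K$ of degree dividing $N_g$ and unramified outside $S'$ is finite and effectively enumerable; let $K_0/K$ be their compositum, a finite Galois extension depending only on $g$, $K$, and $S$.

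For any $A/K$ of dimension $g$ with good reduction outside $S$, the field $L := K(A[\ell])$ is Galois over $K$, of degree dividing $N_g$, and unramified outside $S'$ (good reduction at a place $v\notin S'$ implies $T_\ell A$, and hence $A[\ell]$, is unramified at $v$). Therefore $L\subseteq K_0$. Silverberg's theorem --- stating that for $\ell\geq 3$ every $\Qbar$-endomorphism of $A$ is already defined over $K(A[\ell])$ --- now gives $\End_{K_0}(A) = \End_L(A) = \End_{\Qbar}(A)$. Grothendieck's semistable reduction criterion, combined with the Serre-Tate observation that inertia acts unipotently on $T_\ell A$ once it acts trivially on $A[\ell]$ for $\ell\geq 3$, gives that $A/L$ (and hence $A/K_0$) has semistable reduction at every place.

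To upgrade semistable to split semistable I enlarge $K_0$ once more. The character lattice of the toric part of the N\'eron special fibre at any bad place of $K_0$ is a $\Z$-module of rank $\leq g$ carrying a finite-image local Galois action, whose image lies in a finite subgroup of $\GL_g(\Z)$ of Minkowski-bounded order $M_g$; hence the torus splits over a local extension of degree dividing $M_g$. The compositum $K'$ of $K_0$ with all Galois extensions of $K_0$ of degree dividing $M_g$ unramified outside (the places above) $S'$ is then finite, explicit, and achieves split semistability globally. The final "consequently" clause is formal: since $\End_{K'}^0(A) = \End_{\Qbar}^0(A)$, the two semisimple $\Q$-algebras share their primitive central idempotents, so the $K'$- and $\Qbar$-isogeny decompositions of $A$ coincide, each factor $B_i/K'$ is $\Qbar$-simple, and the $B_i$ remain pairwise non-$\Qbar$-isogenous.

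The main obstacle is effectiveness rather than existence: Hermite-Minkowski gives the requisite finiteness in principle, but to actually write down $K'$ one must invoke explicit algorithmic number theory to enumerate the Galois extensions of $K$ (resp.\ $K_0$) of bounded degree and controlled ramification. This is classical but genuinely nontrivial, and is the step that underwrites the claim that the entire construction terminates in provably finite time.
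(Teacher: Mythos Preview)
Your proposal is correct and follows essentially the same route as the paper: form the compositum of bounded-degree extensions of $K$ with controlled ramification via Hermite--Minkowski, note that each $K(A[\ell])$ lands inside, then invoke Silverberg for the endomorphisms and Grothendieck's criterion for semistability, passing to a further explicit extension for split semistability. If anything your version is more careful --- you use a single prime $\ell\geq 3$ with the sharp degree bound $|\GL_{2g}(\F_\ell)|$ and explain the split-semistable step via the character lattice of the toric part, whereas the paper simply takes $K(A[10^{10}!])$ with an enormous degree bound and asserts the passage to split semistability in one line.
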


(The statement about split semistability, which is the only reason the above carries Grothendieck's name, is not necessary for us here --- we include it only for the reader's convenience.)

\begin{proof}
Let $K''/K$ be the compositum of all extensions $L/K$ of degree $[L:K]\leq 10^{10^{10}\cdot g^{10^{10}}}$ that are unramified outside primes dividing $10^{10}!\cdot \prod_{\pfrak\in S} \Nm\,{\pfrak}$. That $K''/K$ is an explicit finite extension follows from Minkowski's proof of the Hermite-Minkowski theorem.

Now, all $g$-dimensional $A/K$ with good reduction outside $S$ have full $10^{10}!$-torsion defined over $K''$, since $K(A[10^{10}!])\inj K''$. Thus by the usual explicit form of Grothendieck's semistable reduction theorem we see that $A/K''$ is semistable. Since the property is local, by passing to an explicit finite Galois extension $K'/K$ containing $K''$ and depending only on $g$, $K$, and $S$, we ensure that $A/K'$ is split semistable.

Finally it similarly follows from a standard observation of Silverberg (see Theorem $2.4$ of her \cite{silverberg}) that $\End_{K''}(A) = \End_{\Qbar}(A) = \End_{K'}(A)$.
\end{proof}

Now let us prove Lemma \ref{explicit height bound on isogeny factors}.

\begin{proof}[Proof of Lemma \ref{explicit height bound on isogeny factors}.]
Replacing $K$ by the explicit $K'/K$ produced by Theorem \ref{everything happens over an explicit finite extension} applied to $A\times B$, by Theorem \ref{everything happens over an explicit finite extension} it suffices without loss of generality to upper bound $h(A)$ for $A/K$ a $K$-isogeny factor of $B/K$. By Poincar\'{e} complete reducibility, there is an abelian variety $C/K$ such that $B\sim_K A\times C$. By Theorem \ref{masser wustholz bound}, $h(A) + h(C) = h(A\times C)\ll_{h(B), [K:\Q], \dim{B}} 1$ with explicit implied constant. By Theorem \ref{bost lower bound}, $h(C)\gg -\dim{C}\geq -\dim{B}$. Thus $h(A)\ll_{h(B), [K:\Q], \dim{B}} 1$ with explicit implied constant.

\end{proof}

\subsection{Decomposition of $\GL_2$-type abelian varieties over the reals.}

The following lemma arises because the abelian varieties $A/K$ we search for may not necessarily be $K$-simple, and modularity only implies that a $K$-simple such abelian variety is a quotient of the Jacobian of a suitable Shimura curve. The lemma shows that this is not an issue because we produce a $d$-th root as such a quotient for some $d\leq \dim{A}$, so we can (and do) just replace the Jacobian with its $(\dim{A})$-th power in the argument.

\begin{lem}\label{reducibility lemma}
Let $K/\Q$ be a number field with a real place $K\inj \R$. Let $A/K$ be an abelian variety of $\GL_2$-type over $K$. Then: there is a $K$-simple abelian variety $B/K$ of $\GL_2$-type over $K$ such that $A\sim_K B^{\times \frac{\dim{A}}{\dim{B}}}$.
\end{lem}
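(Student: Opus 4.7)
The plan is to decompose $A$ into its $K$-isotypic components, use the real place of $K$ to force isotypicity, and then extract a $\GL_2$-type structure on the $K$-simple factor.

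First I write the $K$-isogeny decomposition $A \sim_K \prod_i B_i^{\times n_i}$ into pairwise non-$K$-isogenous $K$-simple factors, yielding $\End_K^0(A) = \prod_i M_{n_i}(D_i)$ with $D_i := \End_K^0(B_i)$ via central idempotents defined over $K$. Since $E$ is a field whose identity maps to the identity of each factor, each projection $E \to M_{n_i}(D_i)$ is injective; thus $E$ acts faithfully on each isotypic component $A_i := B_i^{\times n_i}$ over $K$, and $H_1(A_i, \Q)$ becomes an $E$-vector space of $E$-dimension $r_i = 2 \dim A_i / \dim A$. By the $\GL_2$-type hypothesis, $H_1(A, \Q)$ is free of rank $2$ as an $E$-module, so $\sum_i r_i = 2$.

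Either (a) one index has $r_i = 2$ and the rest vanish (so $A \sim_K B^{\times n}$ for some $K$-simple $B$ with $n = \dim A / \dim B$), or (b) exactly two indices have $r_i = 1$, forcing $[E:\Q] = 2 \dim A_i$ and $E$-action defined over $K$ for each of the two components, i.e., each such $A_i$ is CM by $E$ over $K$. I rule out (b) using the real place. Picking a real embedding $K \hookrightarrow \R$ and letting $\tau$ denote the induced complex conjugation on $A_{i,\C}$, the pullback $\tau^*$ acts on $H_1(A_i, \Q) \otimes \C$ as a $\C$-linear involution that commutes with $E$ (since $E$ is $K$-defined and $K \subseteq \R$) and swaps $H^{1,0}$ with $H^{0,1}$ (since $\tau$ is antiholomorphic). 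Using $H_1(A_i, \Q) \cong E$ as $E$-module, decompose $H_1(A_i, \Q) \otimes \C = \bigoplus_{\sigma : E \hookrightarrow \C} V_\sigma$ into $1$-dimensional $E \otimes \C$-isotypic pieces; each $V_\sigma$ then lies entirely in $H^{1,0}$ or $H^{0,1}$. But $\tau^*$ preserves $V_\sigma$ while swapping the Hodge pieces, so $\tau^*(V_\sigma) \subseteq V_\sigma \cap (\text{opposite Hodge piece}) = 0$, contradicting the invertibility of $\tau^*$.

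With $A \sim_K B^{\times n}$ in hand, I verify $B$ is of $\GL_2$-type by exhibiting a subfield $E' \hookrightarrow D := \End_K^0(B)$ with $[E':\Q] = \dim B$. From $E \hookrightarrow \End_K^0(A) = M_n(D)$ with $[E:\Q] = n \dim B$ and the general bound $[E:\Q] \leq n d [F:\Q]$ on $\Q$-dimensions of subfields of $M_n(D)$ (with $F := Z(D)$ and $d^2 := [D:F]$), one gets $\dim B \leq d [F:\Q]$. Comparing with Albert's lower bound on $\dim B$: in Types I, II, III ($F$ totally real), $\dim B = d [F:\Q]$ and $E'$ is taken as a maximal subfield of $D$ over $F$ (either $E' = F = D$ for Type I, or a quadratic extension of $F$ inside the quaternion $D$ for Types II, III); in the Type IV sub-cases one similarly produces $E'$, except the anomalous sub-case $\dim B = [F:\Q]/2$ (where $B$ would be CM by $F$ over $K$) is excluded by the same Hodge argument at the real place. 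The main obstacle is this Hodge-theoretic exclusion of CM structures over $K$-with-a-real-place, which is the essential use of the real embedding in both ruling out case (b) and handling the anomalous Type IV sub-case.
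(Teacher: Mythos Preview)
Your argument is correct and follows the same overall strategy as the paper: decompose $A$ into $K$-isotypic pieces, use the embedding $E \hookrightarrow M_{n_i}(D_i)$ to force at most two factors, and exclude the non-isotypic (CM) case via the real place --- your Hodge-theoretic exclusion (the $\C$-linear $\tau^*$ preserves each one-dimensional $V_\sigma$ yet must swap Hodge pieces) is the geometric form of the paper's trace argument, which instead observes that $\tr(x\mid \Lie A)\in K\subseteq \R$ and then chooses $x\in \o_L$ with $\sigma(x)$ large and nearly purely imaginary to force $a_\sigma=b_\sigma$. One minor remark: your separate exclusion of the ``anomalous Type IV sub-case'' ($d=1$, $F$ imaginary CM, $\dim B=[F:\Q]/2$) is not actually needed for the conclusion, since there $E':=F^+\subseteq F=D$ already exhibits $B$ as being of $\GL_2$-type --- though of course that sub-case is also genuinely ruled out by your real-place argument, so no harm done.
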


In fact we will prove the following more general lemma.

\begin{lem}\label{gl2 type abelian varieties are either isotypic or else a product of cm abelian varieties}
Let $K/\Q$ be a number field. Let $A/K$ be an abelian variety which is of $\GL_2$-type over $K$. Then: either there is a $K$-simple abelian variety $B/K$ of $\GL_2$-type over $K$ such that $A\sim_K B^{\times \frac{\dim{A}}{\dim{B}}}$, or else there are two $K$-simple abelian varieties $B_1, B_2/K$ which are non-$K$-isogenous and admit sufficiently many complex multiplications over $K$ such that $A\sim_K B_1^{\times \frac{\dim{A}}{2\dim{B_1}}}\times B_2^{\times \frac{\dim{A}}{2\dim{B_2}}}$.
\end{lem}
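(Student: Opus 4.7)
The plan is to combine Poincar\'{e} complete reducibility, Albert's classification of endomorphism algebras of simple abelian varieties, and the dimension constraint imposed by the $\GL_2$-type hypothesis.

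First I would write $A \sim_K \prod_{i \in I} B_i^{n_i}$ with $B_i/K$ pairwise non-$K$-isogenous and $K$-simple, set $D_i := \End_K^0(B_i)$ with center $F_i$, and put $[D_i : F_i] = e_i^2$, $[F_i:\Q] = f_i$, so that $\End_K^0(A) = \prod_i M_{n_i}(D_i)$. The $\GL_2$-type hypothesis produces a number field $E \hookrightarrow \End_K^0(A)$ with $[E:\Q] = \dim A$. Since $E$ is a field with $1 \mapsto (1,\ldots,1)$, each projection $E \to M_{n_i}(D_i)$ is a nonzero ring map from a field, hence injective. The compositum $E \cdot F_i$ inside $M_{n_i}(D_i)$ is a commutative $F_i$-subalgebra of this $F_i$-central simple algebra, so has $F_i$-dimension at most the reduced degree $n_i e_i$; consequently $[E:\Q] \leq \dim_\Q(E \cdot F_i) \leq n_i e_i f_i$. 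Combined with $\dim A = \sum_j n_j \dim B_j \geq n_i \dim B_i$, this forces $\dim B_i \leq e_i f_i$ for every $i$.

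Next I would cross-reference this with Albert's classification. In Types I, II, III one has $e_i f_i \mid \dim B_i$, so $\dim B_i = e_i f_i$. In Type IV (where $F_i$ is CM) one has $(e_i^2 f_i/2) \mid \dim B_i$, so the inequality $e_i^2 f_i / 2 \leq e_i f_i$ forces $e_i \in \{1,2\}$, and either $\dim B_i = e_i f_i$, or $e_i = 1$ and $\dim B_i = f_i/2$. In the latter sub-case $\End_K^0(B_i) = F_i$ is a CM field of degree $2 \dim B_i$, so $B_i/K$ has sufficiently many complex multiplications over $K$; in every other case a maximal $\Q$-subfield of $D_i$ has degree $\dim B_i$, so $B_i$ is itself of $\GL_2$-type over $K$.

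Finally I would case-split. If some $B_{i_0}$ satisfies $\dim B_{i_0} = e_{i_0} f_{i_0}$, then $\dim A \leq n_{i_0} e_{i_0} f_{i_0} = n_{i_0} \dim B_{i_0}$ combined with $\dim A \geq n_{i_0} \dim B_{i_0}$ gives equality, so $\sum_{j \neq i_0} n_j \dim B_j = 0$ and $A \sim_K B_{i_0}^{\dim A/\dim B_{i_0}}$, the first alternative. Otherwise every $B_i$ is CM with $\dim B_i = f_i/2$, so $\dim A \leq 2 n_i \dim B_i$ for each $i$; summing over $i$ yields $|I|\cdot (\dim A/2) \leq \dim A$, so $|I| \leq 2$. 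If $|I| = 1$ we are back in the first alternative (a CM abelian variety is of $\GL_2$-type via the maximal totally real subfield of its CM endomorphism field). If $|I| = 2$, both $B_i$ are CM and non-$K$-isogenous, and $n_i \dim B_i \geq \dim A/2$ with $n_1 \dim B_1 + n_2 \dim B_2 = \dim A$ forces $n_i = \dim A/(2 \dim B_i)$, the second alternative. I expect no serious obstacle beyond careful bookkeeping of Albert's divisibility constraints; the single substantive input is the dimension inequality for $\Q$-subfields of a CSA.
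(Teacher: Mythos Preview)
Your argument is correct and essentially the same as the paper's: both decompose via Poincar\'{e}, embed $E$ into each $M_{n_i}(D_i)$, derive $g \leq n_i e_i f_i$ (the paper obtains this by tensoring with $\C$ rather than via the maximal-commutative-subalgebra bound, but the two are equivalent), and then invoke Albert's divisibilities to force the dichotomy. One small remark: your bound $\dim_{F_i}(E\cdot F_i)\leq n_i e_i$ requires $E\cdot F_i$ to be \emph{semisimple} (not just commutative), which holds since it is a quotient of the \'{e}tale $\Q$-algebra $E\otimes_\Q F_i$.
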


\begin{proof}
Let $E/\Q$ be a CM field of degree $g := [E:\Q] = \dim{A}$ admitting an embedding $E\inj \End_K^0(A)$. Write, via Poincar\'{e} complete reducibility, $A\sim_K \prod_{i=1}^m B_i^{\times n_i}$ with $B_i/K$ $K$-simple and pairwise non-$K$-isogenous. Write $D_i := \End_K^0(B_i)$, $F_i/\Q$ for the centre of the division algebra $D_i$, and $d_i$ for the index of $D_i$ over $F_i$. Note that by the Albert classification $F_i/\Q$ is CM, and either $d_i = 1$ (i.e.\ $D_i = F_i$) and $[F_i^+:\Q]\,\big\vert \dim{B_i}$, or else $d_i\geq 2$ and $\frac{d_i^2\cdot [F_i:\Q]}{2}\,\big\vert \dim{B_i}$ (moreover $d_i\leq 2$ if $F_i/\Q$ is totally real). Let $j$ be such that $n_j\dim{B_j}\leq n_i\dim{B_i}$ for all $i$ --- thus $n_j\dim{B_j}\leq \frac{g}{m}$. Note that $E\inj \End_K^0(B_i^{\times n_i}) = M_{n_i}(D_i)$ for all $i$ (since $E$ is a field and the identity is in the image). After tensoring up to $\C$ (i.e.\ considering $E\otimes_\Q \C\iso \C^{\oplus g}\inj M_{n_i}(D_i\otimes_\Q \C)\iso M_{d_i n_i}(\C)^{\oplus [F_i:\Q]}$) we find that $g\leq n_id_i\cdot [F_i:\Q]$ --- this forces $g\leq n_i\dim{B_i}$ if either $F_i/\Q$ is totally real or if $d_i\geq 2$, and $g\leq 2n_i\dim{B_i}$ otherwise. It follows that $m\leq 2$. If $m = 1$ then $g = n_1\dim{B_1}$ so $\dim{B_1}\leq d_1\cdot [F_1:\Q]$, whence $d_1\leq 2$ since $\frac{d_1^2\cdot [F_1:\Q]}{2}\,\big\vert \dim{B_1}$ if $d_1\geq 2$, and we are done because quaternion algebras contain many CM quadratic extensions. Otherwise $m = 2$, whence, for all $i$, $g = 2n_i\dim{B_i}$ and moreover $D_i = F_i$ is imaginary CM of degree $[F_i:\Q] = 2\dim{B_i} = \frac{g}{n_i}$. In other words both $B_1/K$ and $B_2/K$ admit sufficiently many complex multiplications over $K$.
\end{proof}

Thus to prove Lemma \ref{reducibility lemma} we must rule out CM factors --- we will do so via the following.

\begin{lem}\label{cm argument}
Let $K/\Q$ be a number field with a real place $K\inj \R$. Let $L/\Q$ be an imaginary CM field. Let $\Phi\subseteq \Hom_{\Q\text{-alg.}}(L,\C)$ be a CM type of $L$. Let $A/K$ be an abelian variety admitting a map $L\inj \End_K^0(A)$. Write $$\Lie(A/\C)\cong \bigoplus_{\sigma\in \Phi} \sigma^{a_\sigma}\oplus \overline{\sigma}^{b_\sigma}$$ as complex representations of $L$ (the base change of $A/K$ taken along $K\inj \R\inj \C$) --- thus $a_\sigma, b_\sigma\in \N$. Then: $a_\sigma = b_\sigma$ for all $\sigma\in \Phi$.
\end{lem}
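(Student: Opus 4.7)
The plan is to use the real place $K\inj \R$ to produce a canonical $\C$-antilinear involution on $\Lie(A/\C)$ that commutes with the $L$-action, and then to observe that such an involution must swap the $\sigma$- and $\overline{\sigma}$-eigenspaces, forcing $a_\sigma = b_\sigma$.

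First I would factor the chosen complex place of $K$ through the real embedding to write $\Lie(A/\C)\iso \Lie(A/\R)\otimes_\R \C$, where $\Lie(A/\R) := \Lie(A)\otimes_K \R$. The nontrivial element of $\Gal(\C/\R)$ then induces a $\C$-antilinear involution $c$ on $\Lie(A/\C)$ (namely $\id\otimes\text{conj}$). Because the homomorphism $L\inj \End_K^0(A)$ is defined over $K$ (in particular over $\R$), the induced $L$-action on $\Lie(A/\C)$ is of the form $\ell\otimes \id$ and so manifestly commutes with $c$.

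Next I would decompose $\Lie(A/\C) = \bigoplus_\tau V_\tau$ into isotypic components under the action of $L\otimes_\Q \C\iso \C^{\Hom_{\Q\text{-alg.}}(L,\C)}$; by the displayed hypothesis $\dim_\C V_\sigma = a_\sigma$ and $\dim_\C V_{\overline{\sigma}} = b_\sigma$ for each $\sigma\in \Phi$. For $v\in V_\sigma$ and $\ell\in L$, antilinearity of $c$ together with $c\circ \ell = \ell\circ c$ will give
$$\ell\cdot c(v) = c(\sigma(\ell)\cdot v) = \overline{\sigma(\ell)}\cdot c(v) = \overline{\sigma}(\ell)\cdot c(v),$$
so $c(V_\sigma)\subseteq V_{\overline{\sigma}}$; applying the same computation to $\overline{\sigma}$ (or invoking $c^2 = \id$) then shows that $c$ restricts to a $\C$-antilinear bijection $V_\sigma\iso V_{\overline{\sigma}}$, whence $a_\sigma = b_\sigma$.

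The main (essentially cosmetic) point is to keep straight which linearity is at play: the real-place hypothesis is exactly what makes $c$ \emph{antilinear}. At a non-real complex place one would instead obtain a $\C$-linear involution that merely permutes eigenspaces according to the Galois action on $\Hom(L,\C)$ — which gives no information relating $a_\sigma$ to $b_\sigma$ individually. Once this distinction is clear the argument is automatic character-theoretic bookkeeping and I do not anticipate any further obstacle.
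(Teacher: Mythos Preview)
Your argument is correct and slightly different in spirit from the paper's. The paper observes that, since $L$ acts on the $K$-vector space $\Lie(A/K)$, the trace of each $\ell\in L$ on $\Lie(A/\C)$ lies in $K\inj\R$; it then applies Minkowski's lattice-point theorem to produce, for each fixed $\sigma\in\Phi$, an $x\in\o_L$ with $\sigma(x)$ large and nearly purely imaginary and $\tau(x)$ tiny for all other $\tau\in\Phi$, so that the imaginary part of $\sum_\tau a_\tau\tau(x)+b_\tau\overline{\tau(x)}\in\R$ forces $a_\sigma=b_\sigma$. Your route instead packages the same reality constraint structurally: the real place gives a genuine real form $\Lie(A/\R)$, hence a $\C$-antilinear involution on $\Lie(A/\C)$ commuting with $L$, which visibly swaps $V_\sigma$ and $V_{\overline\sigma}$. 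This avoids the Minkowski step entirely and is pure linear algebra; the paper's version, on the other hand, makes the role of the trace (and hence the link to the character decomposition) more explicit. Either way the content is the same: reality of the $L$-action on the tangent space at a real place pairs $\sigma$ with $\overline\sigma$.
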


\begin{proof}
$L$ preserves $\Lie(A/K)$ by hypothesis, so that the traces of $L\actson \Lie(A/\C)$ all lie in $K$. For each $\sigma\in \Phi$, use Minkowski to produce an $x\in \o_L$ with $\sigma(x)$ large and almost purely imaginary, and $\tau(x)$ tiny for every other $\sigma\neq \tau\in \Phi$. Since $\sum_{\tau\in \Phi} a_\tau\cdot \tau(x) + b_\tau\cdot \overline{\tau(x)}\in K\inj \R$, it follows that $a_\sigma = b_\sigma$.
\end{proof}

Lemma \ref{reducibility lemma} follows immediately.

\begin{proof}[Proof of Lemma \ref{reducibility lemma}.]
By Lemma \ref{gl2 type abelian varieties are either isotypic or else a product of cm abelian varieties} it suffices to show that there are no abelian varieties $B/K$ with sufficiently many complex multiplications over $K$. But this follows from Lemma \ref{cm argument} (for such abelian varieties $a_\sigma + b_\sigma = 1$ for all $\sigma\in \Phi$).
\end{proof}

\subsection{The residual images of relevant compatible families of two-dimensional Galois representations.}

The following is an effectivization and generalization of a theorem of Dimitrov. We note that it is also an improvement on Lemma $7.1.3$ of the ten-author paper \cite{ten-author-paper}, which takes as input a compatible family and concludes the same for a Dirichlet density $1$ subset of the primes. It is also a key input in \cite{my-isogeny-estimate}, which proves a "height-free" isogeny estimate for $\GL_2$-type abelian varieties.

\begin{lem}[Cf.\ Propositions $3.1$ and $3.8$ of Dimitrov's \cite{dimitrov}, and Lemma $7.1.3$ of the ten-author paper \cite{ten-author-paper}.]\label{irreducibility after some point}
Let $d\in \Z^+$. Let $K/\Q$ be a number field. Let $N\in \Z^+$. Let $\ell$ be a prime of $\Z$ which is prime to $N$.

Then: there is an explicit (thus effectively computable) constant $C_{d,K,N,\ell}\in \Z^+$, depending explicitly and only on $d$, $K$, $N$, and $\ell$, such that the following holds. Let $p\geq C_{d,K,N,\ell}$ be a prime of $\Z$. Let $E/\Q$ be a number field of degree $[E:\Q]\leq d$. Let $\lambda, \pfrak\subseteq \o_E$ be primes of $\o_E$ with $\lambda\vert (\ell)$ and $\pfrak\vert (p)$. Let $\rho_\lambda, \rho_\pfrak$ be representations
\begin{align*}
\rho_\lambda: \Gal(\Qbar/K) &\to \GL_2(\o_{E,\lambda})
\\ \rho_\pfrak: \Gal(\Qbar/K) &\to \GL_2(\o_{E,\pfrak})
\end{align*}\noindent
satisfying the following:
\begin{itemize}
\item for all $v\vert \infty$ and primes $\mfrak\subseteq \o_K$ of $\o_K$ with $\mfrak\nmid (N\ell p)$ and such that $\Nm\,{\mfrak}\leq C_{d,K,N,\ell}$, $$\tr(\rho_\lambda(\Frob_\mfrak)) = \tr(\rho_\pfrak(\Frob_\mfrak))\in \o_E$$ and $$|\tr(\rho_\lambda(\Frob_\mfrak))|_v\leq 10^{10}\cdot (\Nm\,{\mfrak})^{10^{10}},$$
\item $\rho_\lambda\otimes_{\o_{E,\lambda}} \Qbar_\ell$ is irreducible, unramified at primes not dividing $N\ell$, and not of the form $\Ind_{\Gal(\Qbar/L)}^{\Gal(\Qbar/K)}{(\chi)}$ with $L/K$ quadratic and $\chi: \Gal(\Qbar/L)\to \Qbar_\ell^\times$ a character,
\item $\rho_\pfrak\otimes_{\o_{E,\pfrak}} E_\pfrak$ has conductor dividing $(N)\cdot (p)^\infty$ and is crystalline with Hodge-Tate weights $\{0,-1\}$ under all embeddings $K\inj \Qbar_p$.
\end{itemize}
Then: writing the mod-$\pfrak$ residual representation as $\overline{\rho}_\pfrak := \rho_\pfrak\otimes_{\o_{E,\pfrak}} \o_{E,\pfrak}/\pfrak$, there is a $g\in \GL_2(\o_E/\pfrak)$ and a subfield $\F_q\subseteq \o_E/\pfrak$ such that: $$g\cdot \SL_2(\F_q)\cdot g^{-1}\subseteq \overline{\rho}_\pfrak(\Gal(\Qbar/K))\subseteq (\o_E/\pfrak)^\times\cdot (g\cdot \GL_2(\F_q)\cdot g^{-1}).$$
\noindent
(Thus in particular the Taylor-Wiles hypothesis at $\pfrak$, namely that $$(\overline{\rho}_\pfrak\otimes_{\o_E/\pfrak} \Fbar_p)\vert_{\Gal(\Qbar/K(\zeta_p))}: \Gal(\Qbar/K(\zeta_p))\to \GL_2(\Fbar_p)$$ is irreducible, holds.)
\end{lem}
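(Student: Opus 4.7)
The plan is to invoke Dickson's classification of subgroups of $\GL_2$ over finite fields and rule out all non-conclusion cases effectively. For $p$ large enough that $\Nm\,\pfrak > 60$, Dickson's theorem puts the image $G := \overline{\rho}_\pfrak(\Gal(\Qbar/K))$ into one of four mutually exclusive classes: $(a)$ $G$ is contained in a Borel (so $\overline{\rho}_\pfrak$ is reducible), $(b)$ $G$ is contained in the normalizer of a (split or non-split) Cartan but not the Cartan itself (so $\overline{\rho}_\pfrak$ is projectively dihedral, induced from a quadratic extension), $(c)$ the projective image $\overline{G} \subseteq \PGL_2(\o_E/\pfrak)$ is isomorphic to $A_4$, $S_4$, or $A_5$, or $(d)$ the conclusion of the lemma holds. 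The goal is to rule out $(a)$, $(b)$, $(c)$ for $p$ beyond an effectively computable bound $C_{d, K, N, \ell}$.

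Cases $(b)$ and $(c)$ yield to an inertia-at-$p$ analysis. The crystalline hypothesis with distinct Hodge-Tate weights $\{0, -1\}$ under every embedding $K \inj \Qbar_p$, combined with Fontaine-Laffaille theory, forces $\overline{\rho}_\pfrak|_{I_\qfrak}$ (for $\qfrak \mid p$) to contain, modulo wild inertia, either a diagonal image with eigenvalues $(1, \chi_{\mathrm{cyc}})$ (ordinary case) or a cyclic group generated by a fundamental character of level $2$ (non-ordinary case). In both situations the projective image of inertia at $\qfrak$ has order at least $\min(p-1, p+1) = p - 1$. In case $(c)$ this forces $p - 1 \leq 60$, giving the bound directly. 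In case $(b)$, the dihedral structure is determined by a quadratic extension $L/K$ unramified outside primes above $N\ell p$ with ramification at $p$ effectively bounded by Fontaine-Laffaille; by Hermite-Minkowski there is an effectively computable finite list of such $L/K$. For each such $L$, one reduces to case $(a)$ over $L$ to conclude $\rho_\lambda|_{\Gal(\Qbar/L)}$ has reducible trace; Clifford theory then forces $\rho_\lambda \cong \Ind_{\Gal(\Qbar/L)}^{\Gal(\Qbar/K)} \chi$ for some character $\chi$, contradicting the non-induced-from-quadratic hypothesis.

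Case $(a)$ is the main obstacle, and requires the full effectivity machinery. Reducibility gives $\overline{\rho}_\pfrak^{\mathrm{ss}} = \chi_1 \oplus \chi_2$ for characters $\chi_i: \Gal(\Qbar/K) \to (\o_E/\pfrak)^\times$ unramified outside primes above $N\ell p$. The conductor of each $\chi_i$ at primes above $N\ell$ is bounded in terms of $N$ (inherited from the conductor of $\rho_\pfrak$); at primes above $p$, Fontaine-Laffaille classifies $\chi_i|_{I_\qfrak}$ as either trivial or the mod-$p$ cyclotomic character. After peeling off the $p$-dependent cyclotomic factor at primes above $p$, each $\chi_i$ factors through a ray class group of conductor dividing an effectively bounded ideal of $\o_K$, producing a finite effective set of candidate pairs $(\chi_1, \chi_2)$ depending only on $K, E, N, \ell$. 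For each candidate, lift (using the $\ell$-adic cyclotomic character to account for the peeled-off twist) to characteristic-$\ell$ characters $\tilde\chi_1, \tilde\chi_2: \Gal(\Qbar/K) \to \o_{E', \lambda'}^\times$ over an explicit extension $E'/E$. The compatibility $\tr\,\rho_\lambda(\Frob_\mfrak) = \tr\,\rho_\pfrak(\Frob_\mfrak) \equiv \tilde\chi_1(\Frob_\mfrak) + \tilde\chi_2(\Frob_\mfrak) \pmod{\pfrak}$ on the effective Chebotarev set $T_{K, S, 2, N}$ (where $S$ consists of the primes above $N\ell$), combined with the archimedean trace bound $|\tr\,\rho_\lambda(\Frob_\mfrak)|_v \leq 10^{10}(\Nm\,\mfrak)^{10^{10}}$ and the effective bound $\Nm\,\mfrak \ll_{K, S, 2, N} 1$, forces the above congruence to lift to an equality in $\o_{E'}$ for $p$ above an explicit constant (both sides are algebraic integers of explicitly bounded height, so a mod-$\pfrak$ equality becomes an honest equality once $\Nm\,\pfrak$ exceeds twice this height). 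Lemma \ref{faltings' lemma} applied to $\rho_\lambda$ and $\tilde\chi_1 \oplus \tilde\chi_2$ then gives $\tr\,\rho_\lambda = \tilde\chi_1 + \tilde\chi_2$ on all of $\Gal(\Qbar/K)$, so $\rho_\lambda$ is reducible, contradicting the irreducibility hypothesis. The main difficulty, and the key effectivization over Dimitrov's treatment, is simultaneously bounding the relevant ray class groups, pinning down the local shape of each character at $p$ via Fontaine-Laffaille, and upgrading mod-$\pfrak$ trace equalities to equalities of algebraic integers via Lemma \ref{faltings' lemma} and the archimedean trace bound.
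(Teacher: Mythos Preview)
Your overall architecture matches the paper's: Dickson reduces to ruling out the reducible, dihedral, and exceptional cases, and the key ``trick'' of transporting a mod-$\pfrak$ congruence to the $\lambda$-adic side via the compatibility hypothesis and upgrading it to an equality on a Faltings--Serre set is exactly what the paper does. The handling of case $(c)$ is also essentially the same.

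There is, however, a genuine gap in your case $(a)$. You assert that Fontaine--Laffaille forces each $\chi_i|_{I_\qfrak}$ to be either trivial or the mod-$p$ cyclotomic character, and then ``peel off'' a cyclotomic twist to land in a fixed ray class group. This is correct only when $K_\qfrak = \Q_p$. For general $K$, Fontaine--Laffaille gives $\chi_i|_{I_\qfrak} \equiv \prod_{\sigma\in S_i} \sigma$ where $S_i$ ranges over \emph{all subsets} of $\Hom_{\Q\text{-alg.}}(K_\qfrak,\Qbar_p)$; the number of such subsets grows with $[K_\qfrak:\Q_p]$, and there is no a priori reason any global algebraic Hecke character realises a given $S_i$. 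The paper closes this gap by a global argument: evaluating the character at totally positive units $\eta\equiv 1\pmod{\Delta_N}$ of explicitly bounded height gives $\prod_{\sigma\in Z}\sigma(\eta)\equiv 1\pmod{\pfrak''}$, hence $=1$ for $p$ large, whence Dirichlet's unit theorem forces $Z$ to be $\emptyset$, all of $\Hom(K,L)$, or (when $K$ has an imaginary CM subfield) a CM type. Only then does one get algebraic Hecke characters $\widetilde{\chi},\widetilde{\chi}'$ of weight $\leq 1$ with $\tr\rhobar_\pfrak\equiv\widetilde{\chi}+\widetilde{\chi}'$, and only then can the Faltings--Serre argument proceed. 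Your proposal misses this unit-group step entirely, and without it the ``finite effective set of candidate pairs $(\chi_1,\chi_2)$'' depends on $p$, which is circular.

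A smaller issue in case $(b)$: you say the ramification of $L/K$ at $p$ is ``effectively bounded by Fontaine--Laffaille'' and then appeal to Hermite--Minkowski. But if $L$ is allowed to ramify at $p$ at all, the resulting finite list of quadratic extensions depends on $p$, again circular. The paper shows, via a short Mackey/inertia argument, that $L/K$ is in fact \emph{unramified} at every $\qfrak\mid p$, so that $L$ lies in a list $\mathcal{F}$ depending only on $K$ and $N$; this is what lets one set $C_{d,K,N,\ell} := 1+\max_{L\in\mathcal{F}}(C'_{d,L,N,\ell})^2$ and close the argument.
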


The last line follows because $\SL_2(\F_p)$ is perfect, so that the composition $$g\cdot \SL_2(\F_p)\cdot g^{-1}\inj \overline{\rho}_\pfrak(\Gal(\Qbar/K))\surj \overline{\rho}_\pfrak(\Gal(\Qbar/K))/\overline{\rho}_\pfrak(\Gal(\Qbar/K(\zeta_p))$$ is trivial (since the latter is surjected upon by $\Gal(K(\zeta_p)/K)\inj \F_p^\times$), and $\SL_2(\F_p)\inj \GL_2(\Fbar_p)$ is evidently irreducible (consider the torus and then the unipotents).

When $K/\Q$ is totally real our argument largely parallels Dimitrov's except for a trick we introduce, which informally amounts to "moving a congruence in a compatible family until it becomes an equality on a Faltings-Serre set".

\begin{proof}
Because $\SL_2(\F_q)$ is its own commutator subgroup, it suffices to show that there is a subfield $\F_q\subseteq \o_E/\pfrak$ such that the corresponding projective representation $\P(\rhobar_\pfrak): \Gal(\Qbar/K)\to \PGL_2(\o_E/\pfrak)$ has image conjugate to a subgroup of $\PGL_2(\o_E/\pfrak)$ which contains $\PSL_2(\F_q)$ and which is contained in $\PGL_2(\F_q)$. By a classical theorem of Dickson \cite[see pages $285$-$286$ of Ch.\ XII]{dickson}, it then suffices to show that $\rhobar_\pfrak$ is absolutely irreducible, is (absolutely) not induced, and that $\P(\rhobar_\pfrak)$ does not have image of size $\ll 1$.

Because we are going to eventually reduce showing that $\rhobar_\pfrak$ is not induced to showing irreducibility over one of an explicit finite set of quadratic extensions of $K$, for clarity we introduce a second parameter $C_{d,K,N,\ell}'\leq C_{d,K,N,\ell}$ which will be taken to be explicitly sufficiently large.

Let $L/\Q$ be a Galois closure of $K/\Q$, let $\widetilde{E}/\Q$ denote a compositum of $L$ and a Galois closure of $E/\Q$, and let $\widetilde{\pfrak}\subseteq \o_{\widetilde{E}}$ be a prime of $\o_{\widetilde{E}}$ with $\widetilde{\pfrak}\vert \pfrak$. (Thus e.g.\ $[\widetilde{E}:\Q]\leq [E:\Q]!\cdot [K:\Q]!$.)

Now let us note a few consequences of crystallineness of $\rho_\pfrak\otimes_{\o_{E,\pfrak}} E_\pfrak$. First, because $p$ is large, $\rho_\pfrak$ is Fontaine-Laffaille with Hodge-Tate weights $\{0,-1\}$ under all embeddings $K\inj \Qbar_p$. Thus, because this property passes to subquotients, and because the Fontaine-Laffaille-module functor is exact, it follows that $\rhobar_\pfrak$ is Fontaine-Laffaille with the given weights.

Let $\qfrak\subseteq \o_K$ with $\qfrak\vert (p)$ be a prime of $\o_K$ above $p$. Write $I_\qfrak\subseteq \Gal(\Qbar/K)$ for the corresponding inertia subgroup (via a chosen $K_\qfrak\inj \Qbar_p$). If $0\to \alpha\to \rhobar_\pfrak\otimes_{\o_E/\pfrak} \Fbar_p\to \beta\to 0$ with $\alpha, \beta: \Gal(\Qbar/K)\to \Fbar_p^\times$ characters, then, because $\alpha$ and $\beta$ are subquotients of $\rhobar_\pfrak\otimes_{\o_E/\pfrak} \Fbar_p$, they too are Fontaine-Laffaille. It moreover follows that there is a partition $\Hom_{\Q\text{-alg.}}(K_\qfrak, \widetilde{E}_{\widetilde{\pfrak}}) = S_\alpha\disjcup S_\beta$ such that $\alpha\vert_{I_\qfrak}$ has weight $-1$ at an embedding $\sigma: K_\qfrak\inj \widetilde{E}_{\widetilde{\pfrak}}$ if and only if $\sigma\in S_\alpha$.

Because they are Fontaine-Laffaille characters (or for example by twisting by Lubin-Tate characters, or else via Theorem $3.4.3$ of Raynaud's \cite{raynaud-pp}), it follows that $\alpha$ and $\beta$ are products of the fundamental characters corresponding to the respective subsets $S_\alpha$ and $S_\beta$:
\begin{align*}
\alpha\vert_{I_\qfrak}&\equiv \prod_{\sigma\in S_\alpha} \sigma \pmod*{\widetilde{\pfrak}},
\\\beta\vert_{I_\qfrak}&\equiv \prod_{\sigma\in S_\beta} \sigma \pmod*{\widetilde{\pfrak}},
\end{align*}
where we are implicitly identifying the inertia subgroup $I_\qfrak^\text{ab.}$ of $\Gal(\Qbar_p/K_\qfrak)^{\text{ab.}}$ with $\o_{K,\qfrak}^\times$ via local class field theory.

Now let us go through the cases in Dickson's classification one by one.

The third case is easily dealt with: if $\P(\rhobar_\pfrak)$ has image of size $\ll 1$, then, writing $K' := \Qbar^{\ker{\P(\rhobar_\pfrak)}}$ (thus $[K':K]\ll 1$), there is a character $\alpha: \Gal(\Qbar/K')\to (\o_E/\pfrak)^\times$ such that $\rhobar_\pfrak(g)\equiv \alpha(g)\cdot \id\pmod*{\pfrak}$ for all $g\in \Gal(\Qbar/K')\subseteq \Gal(\Qbar/K)$ --- thus in particular $\det{\rhobar_\pfrak(g)}\equiv \alpha(g)^2\pmod*{\pfrak}$. In other words, letting $\qfrak'\subseteq \o_{K'}$ with $\qfrak'\vert (p)$ be a prime above $p$, because $e(\qfrak'/p)\leq [K':K]\ll 1$ (and so still $p\gg e(\qfrak'/p)$) the above reasoning gives that $\alpha\vert_{I_{\qfrak'}}$ is a product of fundamental characters with multiplicities at most $e(\qfrak'/p)\ll 1$, contradicting $\chi_p\vert_{I_{\qfrak'}}\equiv \alpha^2\vert_{I_{\qfrak'}}\pmod*{\pfrak}$. So the third case follows.

Now let us show absolute irreducibility. Suppose for the sake of contradiction that $\rhobar_\pfrak\otimes_{\o_E/\pfrak} \Fbar_p$ is reducible. Then there is a number field $E'/E$ of degree $[E':E]\leq 2$, a prime $\pfrak'\subseteq \o_{E'}$ of $\o_{E'}$ with $\pfrak'\vert \pfrak$, and abelian characters $\psi,\psi': \Gal(\Qbar/K)\to (\o_{E'}/\pfrak')^\times$ such that $$\tr\circ \rho_\pfrak\equiv \psi + \psi'\pmod*{\pfrak'}.$$ Let $E''/\Q$ be a compositum of $E'/\Q$ and $\widetilde{E}/\Q$, and let $\pfrak''\subseteq \o_{E''}$ be a prime of $\o_{E''}$ with $\pfrak''\vert \pfrak'$.

By virtue of being subquotients of $\rhobar_\pfrak$, the characters $\psi$ and $\psi'$ have conductors dividing $(\Delta_N)\cdot (p)^\infty$ with $\Delta_N := N^{(10^{10}\cdot [K:\Q]\cdot [E:\Q])!}$. Therefore by class field theory we may also regard them as characters of the narrow ray class group $\Cl^+_{(\Delta_N)\cdot (p)^\infty}(K) := \invlim_{k\in \Z^+} \Cl^+_{(\Delta_N)\cdot (p)^k}(K)$ associated to the conductor $(\Delta_N)\cdot (p)^\infty$. Again by class field theory we have the exact sequence: $$0\to \left(\prod_{\qfrak\vert (p)} \o_{K,\qfrak}^\times\right)/\overline{\{\eta\in \o_K^\times : \eta\equiv 1\pmod*{\Delta_N}, \eta\text{ tot.\ pos.}\}}\to \Cl^+_{(\Delta_N)\cdot (p)^\infty}(K)\to \Cl^+_{(\Delta_N)}(K)\to 0,$$ where $\overline{\{\eta\in \o_K^\times : \eta\equiv 1\pmod*{\Delta_N}, \eta\text{ tot.\ pos.}\}}$ denotes the closure of $\{\eta\in \o_K^\times : \eta\equiv 1\pmod*{\Delta_N}, \eta\text{ tot.\ pos.}\}\subseteq \prod_{\qfrak\vert (p)} \o_{K,\qfrak}^\times$ under the diagonal embedding and in the product topology.

As we saw in our analysis of the consequences of the Fontaine-Laffaille assumption, for every prime $\qfrak\subseteq \o_K$ of $\o_K$ with $\qfrak\vert (p)$, there is a partition $Z_\qfrak\disjcup Z_\qfrak' = \Hom_{\Q\text{-alg.}}(K_\qfrak, E''_{\pfrak''})$ of $\Hom_{\Q\text{-alg.}}(K_\qfrak, E''_{\pfrak''})\subseteq \Hom_{\Q\text{-alg.}}(K, E'') = \Hom_{\Q\text{-alg.}}(K, L)$ such that $\psi\vert_{I_\qfrak}\equiv \prod_{\sigma\in Z_\qfrak} \sigma\pmod*{\pfrak''}$ and $\psi'\vert_{I_\qfrak}\equiv \prod_{\sigma\in Z_\qfrak'} \sigma\pmod*{\pfrak''}$ (after implicitly composing with the local class field theory isomorphism $I_\qfrak^{\text{ab.}}\simeq \o_{K,\qfrak}^\times$).

Let then $Z := \disjcup_{\qfrak\vert (p)} Z_\qfrak\subseteq \Hom_{\Q\text{-alg.}}(K, L)$ and $Z' := \disjcup_{\qfrak\vert (p)} Z_\qfrak'$.

Regarding $\psi$ as a character of $\Cl^+_{(\Delta_N)\cdot (p)^\infty}(K)$ and evaluating at a totally positive unit $\eta\in \o_K^\times$ with $\eta\equiv 1\pmod*{\Delta_N}$, we find that $$1\equiv \prod_{\sigma\in Z} \sigma(\eta)\pmod*{\pfrak''}.$$

Now, by Minkowski the finite-index subgroup $\{\eta\in \o_K^\times : \eta\equiv 1\pmod*{\Delta_N}, \eta\text{ tot.\ pos.}\}\subseteq \o_K^\times$ is generated by elements of explicitly bounded height (in terms of $K$ and $N$). Thus so long as $C_{d,K,N,\ell}'$ is explicitly sufficiently large in terms of $K$ and $N$, which we may and will assume, at least on the generators the congruence $1\equiv \prod_{\sigma\in Z} \sigma(\eta)\pmod*{\pfrak''}$ implies an equality $1 = \prod_{\sigma\in Z} \sigma(\eta)$, and then this equality extends to the whole of $\{\eta\in \o_K^\times : \eta\equiv 1\pmod*{\Delta_N}, \eta\text{ tot.\ pos.}\}$ by multiplicativity. Therefore the homomorphism $\o_K^\times\to \o_L^\times$ via $\eta\mapsto \prod_{\sigma\in Z} \sigma(\eta)$ has finite image, and thus its image lies in the roots of unity of $L$.

Choosing an embedding $i: L\inj \C$, we see that the homomorphism $\o_K^\times\to \R^+$ via $\eta\mapsto \prod_{\sigma\in Z} |(i\circ \sigma)(\eta)|$ is therefore trivial. So let us redo the analysis that one does when classifying algebraic Hecke characters.

Recall that Dirichlet's unit theorem may be rephrased as the statement that: on any torsion-free finite-index subgroup $G\subseteq \o_K^\times$, the only relations among the multiplicative characters $G\to \R^+$ corresponding to embeddings $i\circ \sigma: K\inj \C$ via $\eta\mapsto |(i\circ \sigma)(\eta)|$ are those generated by the norm relation $\prod_{\sigma: K\inj \C} |(i\circ \sigma)\vert_G| = 1$ and the "trivial" relations $|(i\circ \sigma)\vert_G| = |(i\circ \sigma')\vert_G|$ when $i\circ \sigma, i\circ \sigma': K\inj \C$ are complex-conjugate embeddings.

It follows that: either $Z = \emptyset$, $Z = \Hom_{\Q\text{-alg.}}(K,L)$, or else $Z' = Z^{\text{conj.}}$, where $Z^{\text{conj.}}\subseteq \Hom_{\Q\text{-alg.}}(K,L)$ is the set of $\sigma'\in \Hom_{\Q\text{-alg.}}(K,L)$ for which there is a $\sigma\in Z$ for which $i\circ \sigma$ and $i\circ \sigma'$ are complex-conjugate embeddings $K\inj \C$.

Let us discuss the last of these possibilities, which we will call the "CM case", for a moment. Note that the CM case is independent of the choice of $i$ (by considering $\#|Z|$, for example). Writing $K^{\text{CM}}\subseteq K$ for the maximal CM subfield of $K/\Q$, $L^{\text{CM}}\subseteq L$ for the maximal CM subfield of $L/\Q$, and $H := \Gal(L/L^{\text{CM}})\subseteq \Gal(L/\Q)$ for the normal subgroup (generated by products of pairs of complex conjugations) corresponding to $L^{\text{CM}}\subseteq L$, we see that $H$ fixes $Z$ as a set and that $K^{\text{CM}}/\Q$ and $L^{\text{CM}}/\Q$ are imaginary CM. Further choosing a $\sigma\in Z$, it follows that the set $Z$ is the set of embeddings $K\inj L$ extending an embedding in the uniquely determined\footnote{$\Phi$ is just the set of those embeddings that are restrictions of embeddings in $Z$.} CM type $\Phi\subseteq \Hom_{\Q\text{-alg.}}(K^{\text{CM}}, L)$ of $K^{\text{CM}} = \sigma^{-1}(L^{\text{CM}}\cap \sigma(K))$.

We conclude that in all three cases there are weight $\leq 1$ algebraic Hecke characters $\chi$ and $\chi'$ of $K$ valued in $L$ and thus $E''$ such that the finite-order Galois characters $\psi\cdot \chi^{-1}\pmod*{\pfrak''}$ and $\psi'\cdot \chi'^{-1}\pmod*{\pfrak''}$ are unramified outside primes dividing $N$. Indeed: when $Z = \emptyset$ we take $\chi = \triv$ and $\chi'$ to be the cyclotomic character, when $Z = \Hom_{\Q\text{-alg.}}(K,L)$ we take $\chi$ to be the cyclotomic character and $\chi'$ to be trivial, and in the CM case we take $\chi$, respectively $\chi'$ to be the algebraic Hecke character on $K$ corresponding to the CM type $\Phi$, respectively its complement, on $K^{\text{CM}}$. In other words (absorbing the finite-order characters into the notation) we have concluded that there are weight $\leq 1$ algebraic Hecke characters $\widetilde{\chi}$ and $\widetilde{\chi}'$ of conductor dividing $(\Delta_N)$ and valued in $E''$ such that $$\tr\circ \rhobar_\pfrak\equiv \widetilde{\chi} + \widetilde{\chi}'\pmod*{\pfrak''},$$ where we have abused notation by also using $\widetilde{\chi}$ and $\widetilde{\chi}'$ for the corresponding $\pfrak''$-adic Galois characters.

Now it is time for our trick. It follows that for all primes $\mfrak\subseteq \o_K$ of $\o_K$ such that $\mfrak\nmid (N\ell)$ and such that $\Nm\,{\mfrak}\leq C_{d,K,N,\ell}'$,
\begin{align*}
\tr(\rho_\lambda(\Frob_\mfrak)) &= \tr(\rho_\pfrak(\Frob_\mfrak))
\\&\equiv \widetilde{\chi}(\mfrak) + \widetilde{\chi}'(\mfrak)\pmod*{\pfrak''},
\end{align*}\noindent
so that by comparing sizes of the left- and right-hand sides we see that $\tr(\rho_\lambda(\Frob_\mfrak)) = \widetilde{\chi}(\mfrak) + \widetilde{\chi}'(\mfrak)$ for all such $\mfrak$. By Lemma \ref{faltings' lemma}, upon choosing $C_{d,K,N,\ell}'$ suitably it follows that the representation $\rho_\lambda$ is reducible (into the direct sum of the $\ell$-adic Galois characters corresponding to $\widetilde{\chi}$ and $\widetilde{\chi}'$), a contradiction. We conclude that $\rhobar_\pfrak$ is indeed absolutely irreducible.

It remains only to show that $\rhobar_\pfrak$ is not absolutely induced. We will do this by reducing to the same argument over an explicit finite extension $L/K$ depending only on $K$ and $N$. Suppose for the sake of contradiction that $\rhobar_\pfrak$ is absolutely induced. Write $L/K$ for the corresponding quadratic extension, and $\chi: \Gal(\Qbar/L)\to \Fbar_p^\times$ for the corresponding character (so that $\rhobar_\pfrak\otimes_{\o_E/\pfrak} \Fbar_p\iso \Ind_{\Gal(\Qbar/L)}^{\Gal(\Qbar/K)}(\chi)$).

Note that $L/K$ is automatically unramified outside primes dividing $Np$. We claim that it is in fact unramified at primes above $p$ as well. Indeed, on the one hand we have already seen that, for all primes $\qfrak\subseteq \o_K$ of $\o_K$ with $\qfrak\vert (p)$, $\rhobar_\pfrak\vert_{I_\qfrak}$ is reducible, i.e.\ it admits a stable line, or equivalently $\P(\rhobar_\pfrak(I_\qfrak))\subseteq \PGL_2(\o_E/\pfrak)$ has a fixed point in $\P^1(\o_E/\pfrak)$. On the other hand, were $L/K$ ramified at $\qfrak$ then, writing $\qfrak'\vert \qfrak$ for the prime of $\o_L$ above $\qfrak$, the injection $I_\qfrak/I_{\qfrak'}\inj \Gal(L/K)$ would be an isomorphism. Thus $\Ind_{\Gal(\Qbar/L)}^{\Gal(\Qbar/K)}(\chi)\big\vert_{I_\qfrak}\simeq \Ind_{I_{\qfrak'}}^{I_\qfrak}(\chi\vert_{I_\qfrak'})$ (for example by Mackey). Thus in the canonical basis there is a lift $\sigma\in I_\qfrak$ of the nontrivial element of $\Gal(L/K)$ which acts by $\twobytwo{0}{\chi(\sigma^2)}{1}{0}$, while a $g\in I_{\qfrak'}$ acts by $\twobytwo{\chi(g)}{0}{0}{\chi(\sigma^{-1}\cdot g\cdot \sigma)}$, and for these to have a common fixed point in $\P^1(\Fbar_p)$ we must have that $\chi(g) = \chi(\sigma^{-1}\cdot g\cdot \sigma)$ for all $g\in I_{\qfrak'}$. But then $\P(\rhobar_\pfrak)\vert_{I_\qfrak}$ would be trivial on all of $I_{\qfrak'}$, and thus have image of order at most $2$. Consequently we would be able to repeat the argument (which only used that the image of $\P(\rhobar_\pfrak)\vert_{I_\qfrak}$ had bounded exponent) we used to deal with the third case of the Dickson classification to deduce that $p\ll 1$, a contradiction.
So $L/K$ is indeed unramified at $\qfrak$ for all primes $\qfrak\subseteq \o_K$ of $\o_K$ with $\qfrak\vert (p)$.

So we see that $L/K$ is a quadratic extension which is unramified outside primes dividing $N$. By inspection or by Minkowski's proof of the Hermite-Minkowski theorem it follows that there is an explicit finite set $\mathcal{F}$ of quadratic extensions of $K$ depending only on $K$ and $N$ such that $L\in \mathcal{F}$. We conclude that, were $\rhobar_\pfrak$ induced, then there would be an $L\in \mathcal{F}$ such that $\rhobar_\pfrak\vert_{\Gal(\Qbar/L)}$ would be absolutely reducible. Therefore so long as we choose $C_{d,K,N,\ell} := 1 + \max_{L\in \mathcal{F}} (C_{d,L,N,\ell}')^2$ (the square because the relevant $L/K$ are quadratic), i.e.\ so long as the above absolute irreducibility argument works not only for $K$ but for all $L\in \mathcal{F}$ as well, it follows that $\rhobar_\pfrak$ is not absolutely induced.
\end{proof}

\section{Proof of Theorem \ref{the odd degree theorem}.}

We will now prove Theorem \ref{the odd degree theorem}. Rather than writing out an algorithm in pseudocode and proving its correctness and that it always terminates as in Chapter $9$ of \cite{my-thesis}, we will instead just describe the technique in words in the course of the proof.

For the duration of this section, and for the sake of brevity, we will call an $A/K$ "relevant" if and only if it is a $g$-dimensional abelian variety over $K$ with good reduction outside $S$ and which is of $\GL_2$-type over $K$.

First let us show that it suffices to produce an effectively computable finite set $\mathcal{F}$ of odd-degree totally real extensions such that all relevant $A/K$ are modular over at least one $L\in \mathcal{F}$.

\begin{lem}\label{reduction to potential modularity}
Let $\mathcal{F}$ be a finite set of odd-degree totally real extensions of $K$ such that for all relevant $A/K$ there is an $L\in \mathcal{F}$ such that $A/L$ is modular. Then: there is an explicit (thus effectively computable) constant $C_{g,K,S,\mathcal{F}}\in \Z^+$ depending explicitly and only on $g$, $K$, $S$, and $\mathcal{F}$ such that, for all relevant $A/K$, $h(A)\leq C_{g,K,S,\mathcal{F}}$.
\end{lem}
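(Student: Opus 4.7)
Given a relevant $A/K$, the plan is to pick an $L\in \mathcal{F}$ over which $A$ becomes modular and thereby reduce to bounding the Faltings height of a single $L$-simple $\GL_2$-type abelian variety $B/L$. Since $\End_K^0(A)\inj \End_L^0(A)$, $A/L$ remains of $\GL_2$-type, and since $L$ is totally real Lemma \ref{reducibility lemma} applies over $L$ and produces an $L$-simple $B/L$ of $\GL_2$-type with $A\sim_L B^{\times n}$, where $n := \dim{A}/\dim{B}\leq g$. By N\'{e}ron-Ogg-Shafarevich $B/L$ has good reduction outside the primes of $L$ above $S$, so the conductor of $B/L$ divides an explicit (in $g$, $L$, $S$) power of the product of those primes.

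Next, modularity of $A/L$ passes to its $L$-isogeny factor $B/L$, which thus corresponds to a parallel weight two Hilbert modular eigencuspform $f$ over the odd-degree totally real field $L$ of bounded level. The Eichler-Shimura-type construction available over odd-degree totally real fields (via the Jacquet-Langlands correspondence to a quaternion algebra ramified at all but one infinite place) attaches to $f$ an abelian variety $B_f/L$ that is $L$-isogenous to $B$, and the Faltings height $h(B_f)$ admits an explicit upper bound in terms of $[L:\Q]$ and the level of $f$ --- via e.g.\ Petersson-norm or period estimates, or by realizing $B_f$ as an isogeny factor of the Jacobian of an appropriate Shimura curve and bounding that Jacobian's height in turn. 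Applying Theorem \ref{masser wustholz bound} to the isogeny $B_f\to B$ yields an explicit bound on $h(B)$; additivity of the Faltings height under products gives the same for $h(B^{\times n})$ (using $n\leq g$); and a second application of Theorem \ref{masser wustholz bound} to an isogeny $B^{\times n}\to A$ converts this into an explicit bound on $h(A/L)$.

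Finally I would pass from $h(A/L)$ back to $h(A/K)$ via invariance of the stable Faltings height under base change, which is justified after first enlarging to a common explicit finite Galois extension of $K$ containing $L$ over which $A$ is semistable --- this is exactly what Theorem \ref{everything happens over an explicit finite extension} provides. Taking the maximum of the resulting explicit bound over the finitely many $L\in \mathcal{F}$ then produces the desired constant $C_{g,K,S,\mathcal{F}}$. The main obstacle I anticipate is the effective upper bound on $h(B_f)$: this is precisely the quantitative automorphic input that makes the whole argument effective, and it is also the step that forces the odd-degree hypothesis, since outside that regime even the existence of $B_f$ attached to $f$ is conjectural.
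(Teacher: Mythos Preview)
Your proposal is correct and follows essentially the same strategy as the paper: pass to $L$, use Lemma \ref{reducibility lemma} to extract a simple $\GL_2$-type factor, match it to a Hilbert eigencuspform of bounded level, realize the corresponding abelian variety inside the Jacobian of an explicit Shimura curve, and transfer the resulting height bound back via isogeny estimates.

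A few organizational differences are worth noting. The paper disposes of the base-change issue at the outset by observing that the (stable) Faltings height is invariant under extension of the ground field, so one may simply assume $L=K$; your final paragraph invoking Theorem \ref{everything happens over an explicit finite extension} to reconcile $h(A/L)$ with $h(A/K)$ is therefore unnecessary. The paper also does not assert that $B_f$ is $L$-isogenous to your $B$: it only argues, via Faltings' Tate conjecture, that the Tate modules of $A$ and of Hida's $A_f$ share a nonzero summand, hence $A$ is a $K$-isogeny factor of $A_f^{\times g}$ and thus of (Jacobian)$^{\times g}$. This sidesteps any comparison of Hecke fields or dimensions and feeds directly into Lemma \ref{explicit height bound on isogeny factors}, which packages Theorems \ref{masser wustholz bound} and \ref{bost lower bound} together. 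Your route through $h(B_f)\to h(B)\to h(B^{\times n})\to h(A)$ works too, but the paper's formulation avoids having to argue that $A_f$ is itself simple.
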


\begin{proof}
Without loss of generality we may assume $\mathcal{F}$ is a singleton, say $\mathcal{F} = \{L\}$ (just take a maximum). We may further without loss of generality assume $L = K$ since the absolute Faltings height is invariant under base change --- in other words we need only prove such a bound for those relevant $A/K$ which are modular over $K$ itself.

To do this we will show that all relevant $A/K$ which are modular over $K$ are $K$-isogeny factors of the $g$-th power of the Jacobian of an explicit Shimura curve with level structure, though the notation will make the evident proof (modularity produces an "$f$", Hida produces an $A_f$ with matching $L$-function as such a quotient, so since $A$ and $A_f$ have matching degree-two $L$-functions by Faltings there is a nontrivial $K$-map $A_f\to A$ whence $A$ is a $K$-quotient of $B^{\dim{A}}$) quite clumsy.

Let $A/K$ be relevant and modular over $K$. Let $\o$ be an order in a CM field $F/\Q$ such that $[F:\Q] = g$ and $\o\inj \End_K(A)$ (one exists by hypothesis). Let $\ell\in \Z^+$ be an explicitly sufficiently large prime of $\Z$ (so that e.g.\ $\ell$ is prime to the discriminant of $\o$ and all the norms of primes in $S$).

Let, via Lemma \ref{reducibility lemma}, $\widetilde{A}/K$ be $K$-simple, of $\GL_2$-type over $K$, and such that $A\sim_K \widetilde{A}^{\times (\dim{A}/\dim{\widetilde{A}})}$.

For each prime $\lambda\subseteq \o$ of $\o$ with $\lambda\vert (\ell)$, write $\rho_{A,\lambda}: \Gal(\Qbar/K)\to \GL_2(\o_\lambda)$ for the representation corresponding to the $\lambda$-adic Tate module $T_\lambda(A) := \invlim_n A[\lambda^n]$ of $A/K$. Write also $\rho_{A,\ell}: \Gal(\Qbar/K)\to \GL_{2g}(\Z_\ell)$ for the representation corresponding to the $\ell$-adic Tate module $T_\ell(A) := \invlim_n A[\ell^n]$ of $A/K$. Thus e.g.\ $\rho_{A,\ell}\otimes_{\Z_\ell} \Qbar_\ell\simeq \bigoplus_{\lambda\vert (\ell), \sigma: F_\lambda\inj \Qbar_\ell} \rho_{A,\lambda}\otimes_{\o_\lambda, \sigma} \Qbar_\ell$.

Modularity entails the existence of a parallel weight $2$ Hilbert modular eigencuspform $f$ over $K$ and a $\lambda\subseteq \o$ with $\lambda\vert (\ell)$ such that $L(s,\rho_\lambda) = L(s,f)$. Said another way, modularity entails the existence of a parallel weight $2$ Hilbert modular eigencuspform $f$ such that $L(s,f)$ matches one of the degree-two $L$-functions of $A/K$. Let $\nfrak\subseteq \o_K$ be the conductor of $A/K$ (thus $\Nm\,{\nfrak}\ll_{[K:\Q],S} 1$ with explicit implied constant). By the equality of $\eps$-factors in Section $0.5$ of Carayol's \cite{carayol}, it follows that the level of $f$ is $\nfrak$.

Now by a standard construction of Hida (combine Theorem $4.4$ of Hida's \cite{hida} with Jacquet-Langlands transfer) there is an explicit abelian variety $B/K$, namely the Jacobian of an explicit Shimura curve with level structure depending explicitly on $\nfrak$, such that there is a $K$-isogeny factor $A_f/K$ of $B/K$ which is of $\GL_2$-type over $K$ and such that $L(s,f)$ matches one of the degree-two $L$-functions of $L(s,A_f)$. It follows that $T_\ell(A)\otimes_{\Z_\ell} \Qbar_\ell$ and $T_\ell(A_f)\otimes_{\Z_\ell} \Qbar_\ell$ share a nonzero summand, whence $\Hom_K(A,A_f)\otimes_\Z \Qbar_\ell\simeq \Hom_{\Z_\ell[\Gal(\Qbar/K)]}(T_\ell(A), T_\ell(A_f))\otimes_{\Z_\ell} \Qbar_\ell\neq 0$, where we have used Faltings' proof of the Tate conjecture for homomorphisms of abelian varieties. Thus $A/K$ and $A_f/K$ share a nonzero $K$-isogeny factor (automatically of the form $\widetilde{A}^{\times k}$ for some $k\in \Z^+$), whence $A$ is a $K$-isogeny factor of $A_f^{\times g}$, whence also a $K$-isogeny factor of $B^{\times g}$.

Finally an application of Lemma \ref{explicit height bound on isogeny factors} provides the desired explicit height bound.
\end{proof}

So to prove Theorem \ref{the odd degree theorem} it suffices to prove that there is an effectively computable finite set $\mathcal{F}$ of odd-degree totally real extensions of $K$ such that all relevant $A/K$ are modular over some $L\in \mathcal{F}$. This we do by explaining how to effectively compute the extension produced by an argument of Snowden proving Taylor's potential modularity theorem.

\begin{proof}[Proof of Theorem \ref{the odd degree theorem}.]
Let us now follow Snowden's \cite{snowden}. The first thing to note is that Proposition $5.3.1$ of his \cite{snowden} can evidently be improved to a statement asserting the existence of a finite-time algorithm computing, in his notation, such a pair $(F'/F, A/F')$ given an input $(F,K,M,\pfrak,\mathfrak{l},\rho_\pfrak,\rho_{\mathfrak{l}},t_\pfrak,t_{\mathfrak{l}})$ as in the statement of the proposition. Indeed, the moduli space $X$ he produces is a quasiprojective variety (via the usual Satake/Baily-Borel compactification), and he invokes Moret-Bailly's theorem to prove $X$ has a totally real point with certain properties that can easily be checked in finite time. Because $X$ is quasiprojective, say via $X\inj \P^N$, we may simply enumerate the totally real points of $\P^N$ of larger and larger height and degree, and in finite time we will find a suitable point on $X$ by his proof of his Proposition $5.3.1$.\footnote{One should of course be able to be more explicit, but this would require giving an explicit proof of Moret-Bailly's theorem at least in this case.} So Snowden's Proposition $5.3.1$ may be replaced by the stronger statement that there is an effectively computable pair $(F'/F, A/F')$ (corresponding to an effectively computable totally real point on $X$, and effectively computable in terms of the input $(F,K,M,\pfrak,\mathfrak{l},\rho_\pfrak,\rho_{\mathfrak{l}},t_\pfrak,t_{\mathfrak{l}})$) satisfying the conclusions of the proposition.

Now let $N := 10^{10}!\cdot \prod_{\qfrak\in S} (\Nm\,{\qfrak})^{(10^{10}\cdot [K:\Q])!}$. Let $\ell\in \Z^+$ be a prime of $\Z$ with $\ell > N$ and $\ell\ll N$ with explicit implied constant. Let $p\in \Z^+$ be a prime of $\Z$ produced by Lemma \ref{irreducibility after some point} on input $(g,K,N,\ell)$ and such that $p\ll_{g,K,N,\ell} 1$ with explicit implied constant.

Let $q := p^{(10^{10}\cdot g)!}$. Let $\Phi$ be the explicit finite set of (isomorphism classes of) representations $\rhobar: \Gal(\Qbar/K)\to \GL_2(\F_q)$ which are odd, unramified outside primes dividing $Np$, are such that $\det{\rhobar}\cdot \chi_p^{-1}$ is unramified outside primes dividing $N$, and which have image containing a conjugate of $\SL_2(\F_p)\subseteq \GL_2(\F_q)$. (This is an explicit finite set of representations by Minkowski's proof of the Hermite-Minkowski theorem.) Note that all such $\rhobar\in \Phi$ satisfy Snowden's hypothesis "(A1)" (and of course his hypothesis "(A2)" since $p$ is large).

Lemma \ref{irreducibility after some point} amounts to the statement that, for all relevant $A/K$, the mod-$p$ two-dimensional residual representation $\rhobar_{A,\Fbar_p}: \Gal(\Qbar/K)\to \GL_2(\Fbar_p)$ corresponding to $A/K$ is an element of $\Phi$, in the sense that there is a $\rhobar\in \Phi$ with $\rhobar_{A,\Fbar_p}\iso \rhobar\otimes_{\F_q} \Fbar_p$.

It is clear from Snowden's proof (see especially his $(5.5)$) of Theorem $5.1.1$ of his \cite{snowden} that, upon replacing his Proposition $5.3.1$ with the strengthening given above, one produces an effectively computable output $(M',F')$ given the input $(\rhobar,\psi,M,t)$, to use the notation of his Theorem $5.1.1$.

Let then $\mathcal{F}'$ be the effectively computable finite set of totally real Galois extensions $F'$ produced by (the algorithm implicit in) said improvement of Theorem $5.1.1$ applied to all tuples $(\rhobar,\Qbar^{\ker{\rhobar}},t)$ with $\rhobar\in \Phi$ and (using Snowden's terminology) $t$ a definite type function on the primes of $K$ above $p$.

We claim that, for all relevant $A/K$, there is an $L\in \mathcal{F}'$ such that $A/L$ is modular over $L$, and indeed this follows immediately from $(5.6)$ of Snowden's \cite{snowden}. Indeed, by construction there is a $\rhobar\in \Phi$ such that $\rhobar_{A,\Fbar_p}\iso \rhobar\otimes_{\F_q} \Fbar_p$, and so by construction of $\mathcal{F}'$ there is an $L\in \mathcal{F}'$ such that there is a Hilbert modular eigencuspform $\widetilde{f}$ over $L$ with mod-$p$ residual representation $\rhobar$ and type function matching that of $A/L$ at primes of $L$ above $p$ (the type function of an abelian variety is definite by purity). By Snowden's Theorem $4.4.2$ we conclude that $A/L$ is modular.

So we have proven that there is a finite set of totally real Galois extensions $\mathcal{F}'$ such that all relevant $A/K$ are modular over some $L\in \mathcal{F}'$. The only thing left to note is that, just as in Snowden's proof of his Proposition $9.4.1$, if an abelian variety $A/K$ is modular over an $L\in \mathcal{F}'$, then, letting $H\subseteq \Gal(L/K)$ be a $2$-Sylow subgroup (thus solvable), by Langlands' solvable descent for $\GL_2$ \cite{langlands} it follows that $A/K$ is modular over $L^H$.

So we let $\mathcal{F}$ be the effectively computable finite set of extensions $L^H$ with $L\in \mathcal{F}'$ and $H\subseteq \Gal(L/K)$ a $2$-Sylow subgroup. We have therefore shown that all relevant $A/K$ are modular over some $L\in \mathcal{F}$, so we are done by Lemma \ref{reduction to potential modularity}.
\end{proof}

\section{Proof of Corollary \ref{the odd degree corollary}.}\label{the odd degree corollary section}

The proof of Corollary \ref{the odd degree corollary} is exactly the same as Parshin's reduction of the Mordell conjecture to the Shafarevich conjecture on abelian varieties via Kodaira's families: compactness provides an a priori estimate on the conductors of abelian varieties occurring in a given family over a proper curve.

\begin{proof}[Proof of Corollary \ref{the odd degree corollary}.]
Let $H_\o$ be a Hilbert modular variety and $f: C\to H_\o$ a map, without loss of generality defined over $K$ itself, witnessing the fact that $C/K$ is of type $\bigiy$ (such an $H_\o$ and such a map $C\to H_\o$ may be found in finite time by brute force search).

Let $\mathscr{C}/\o_K$ be the minimal proper regular model of $C/K$, and let $T$ be an explicit finite set of primes containing $S$ such that $\mathscr{C}/\o_{K,T}$ and $\mathcal{H}_\o/\o_{K,T}$ (the canonical integral model) are smooth and such that the map $C\to H_\o$ extends to $\mathscr{C}\to \mathcal{H}_\o$ over $\o_{K,T}$.

Thus $C(K)\inj \mathscr{C}(\o_{K,T})\to \mathcal{H}_\o(\o_{K,T})$ is finite-to-one, where we have used properness to produce the first map of sets --- thus e.g.\ for all $P\in C(K)$ the abelian variety $A_P/K$ corresponding to the image of $P$ under $C\to H_\o$ has good reduction outside $T$.

The usual explicit estimate for the difference between the Faltings height and the modular height, aka the height pulled back from the Satake/Baily-Borel compactification, implies that there is an explicit very ample divisor (more properly, height data) $D$ of $C$ which is defined over $K$ and such that, for all $P\in C(K)$, $h_D(P)$ is explicitly bounded in terms of $h(A_P)$, where $h_D$ is the height of $P\in C(K)$ with respect to $D$.

Thus by Theorem \ref{the odd degree theorem} we are done.
\end{proof}

\section{Proof of Theorem \ref{the odd degree example}.\label{the example section}}

We have already explained why there are infinitely many curves of type $\bigiy$, but we found it difficult to produce explicit examples of such curves by following that argument (which is explicit, though evidently only in principle) because we were unable to find sufficiently explicit presentations of Hilbert modular varieties (with suitable level structure) as subvarieties of projective space in the literature.

So we will proceed in a different manner. The following family of examples was reverse-engineered from formula $(17)$ of Deines-Fuselier-Long-Swisher-Tu's \cite{deines-fuselier-long-swisher-tu}.

\begin{proof}[Proof of Theorem \ref{the odd degree example}.]

Let, for $a\in \Qbar^\times$ totally real of odd degree, $C_a : x^6 + 4y^3 = a^2$.

Let, for $\lambda\neq 0,1,\infty$, $X_\lambda/\Q(\lambda)$ be the desingularization of the curve $y^6 = x^4 (1 - x)^3 (1 - \lambda\cdot x)$. Note that each fibre of this family $X\to \P^1 - \{0,1,\infty\}$ is a genus $3$ curve. Let $\Jac\,{X_\lambda}/\Q(\lambda)$ be the corresponding family of Jacobians, and let $A_\lambda/\Q(\lambda)$ be the family of Prym varieties corresponding to the map to the desingularization $E_\lambda : y^2 = x^3 + 16\lambda^2$ of the curve $y^3 = x^4 (1-x)^3 (1 - \lambda\cdot x)$. In other words, let $A_\lambda/\Q(\lambda)$ be the connected component of the identity in the kernel of the map on Jacobians induced by the map $X_\lambda\to E_\lambda$ arising from $(x,y)\mapsto (x,y^2)$ taking $y^6 = x^4(1-x)^3(1-\lambda\cdot x)$ to $y^3 = x^4(1-x)^3(1-\lambda\cdot x)$.

Thanks to the automorphism $(x,y)\mapsto (x, \zeta_6\cdot y)$ of $y^6 = x^4 (1-x)^3 (1-\lambda\cdot x)$, it follows that $\Z[\zeta_3]\inj \End_{\Q(\zeta_3, \lambda)}(A_\lambda)$. Thus each abelian surface $A_\lambda/\Q(\lambda)$ is of $\GL_2(\Z[\zeta_3])$-type over $\Q(\zeta_3, \lambda)$.

Because the relevant triangle group, namely $\Delta(3,6,6)$, is arithmetic, we will get even more. Specifically, we will see that the base change $A_\lambda/\overline{\Q(\lambda)}$ admits quaternionic multiplication by an order in the indefinite quaternion algebra over $\Q$ of discriminant $6$. In fact we will further see that the quaternionic multiplication is defined over $\Q\left(\zeta_3, \lambda, (\lambda\cdot \left(\frac{1-\lambda}{4}\right)^2)^{\frac{1}{6}}\right)$, but let us leave this aside for a moment.

Let us first explain how to form a family of abelian surfaces over our $C_a/\Q(a^2)$ using this construction.

Write, for $P =: (x,y)\in C_a$, $f_a(P) := \frac{x^6}{a^2}$. We claim that the pullback family $P\mapsto A_{f_a(P)}$, which is a priori defined over $C_a - f_a^{-1}\left(\{0, 1, \infty\}\}\right)$, extends to a family over all of $C_a$. This is essentially checked in a number of places (see e.g.\ Section $3.1$ of Cohen-Wolfart's \cite{cohen-wolfart}), but we will prove this in Appendix \ref{explicit extension of the hypergeometric family appendix} in order to be thorough and because the relevant period calculations are quite enjoyable. The point is that, because $P\mapsto f_a(P)$, respectively $P\mapsto 1 - f_a(P)$, has a holomorphic sixth root, respectively a holomorphic cube root, on $C_a(\C)$, the particular Schwarz triangle functions that arise in the period lattice of $X_{f_a(P)}$ may be holomorphically continued to their singular points.

So the extended family $A\to C_a$ is a non-isotrivial family of abelian surfaces over $C_a$ which (by considering the family over $C_a - f_a^{-1}(\{0,1,\infty\})$) is defined over $\Q(a^2)$. Moreover we see that $\Z[\zeta_3]\inj \End_{\Q(\zeta_3, f_a(P))}(A_{f_a(P)})$ for all $P\in C_a$.

Just as in the proof of Corollary \ref{the odd degree corollary}, we deduce that there is an explicit finite set $S$ such that, for all $P\in C_a(K)$, the abelian surface $A_{f_a(P)}/K$ has good reduction outside $S$.\footnote{For example we show in Appendix \ref{explicit extension of the hypergeometric family appendix} that there is a map $C_a\to A_3$ such that, writing the induced family of abelian threefolds as $J\to C_a$, we have that $A_{f_a(P)}$ is an isogeny factor of $J_P$. Now one can argue exactly as in the proof of Corollary \ref{the odd degree corollary}, with the moduli space $A_3$ replacing $H_\o$.}

Now let us show that, at least for $P\in C_a(K)$ with $f_a(P)\not\in \{0,1,\infty\}$, in fact $A_{f_a(P)}/K$ is of $\GL_2$-type over $K$. Of course the relevant endomorphism ring is not $\Z[\zeta_3]\inj \End_{K(\zeta_3)}(A_{f_a(P)})$, but thanks to said endomorphism ring for $p$ large we do find that, letting $\pfrak\subseteq \Z[\zeta_3]$ with $\pfrak\vert (p)$ be a prime of $\Z[\zeta_3]$ above $p$ and writing $\rho_{P,\pfrak}: \Gal(\Qbar/K(\zeta_3))\to \GL_2(\Z[\zeta_3]_\pfrak)$ for the representation corresponding to $T_\pfrak(A_{f_a(P)}) := \invlim_n A[\pfrak^n]$, $\rho_{A_{f_a(P)},p}\simeq \rho_{P,\pfrak}\oplus \sigma\circ \rho_{P,\pfrak}$, where $\sigma\in \Gal(\Q(\zeta_3)/\Q)$ is complex conjugation (acting at the level of coefficients) and as usual $\rho_{A_{f_a(P)}, p}$ is the representation corresponding to $T_p(A_{f_a(P)})$. (This decomposition corresponds to and arises from the decomposition $\Z[\zeta_3]\otimes_\Z \Z_p\simeq \bigoplus_{\qfrak\vert (p)} \Z[\zeta_3]_\qfrak$, specifically via the action of the corresponding idempotents.) So far we have simply expressed that $A_{f_a(P)}/K(\zeta_3)$ is of $\GL_2$-type over $K(\zeta_3)$.

Now it is time to see the arithmeticity of $\Delta(3,6,6)$ at the level of Galois representations. We claim that $(\sigma\circ \rho_{P,\pfrak})\otimes_\Z \Q\iso \rho_{P,\pfrak}\otimes_\Z \Q$, or in other words that the Frobenius traces of $\rho_{P,\pfrak}$ at large primes are rational integers.

By counting points on $X_\lambda$ over finite fields in the usual way one sees that the Frobenius traces of $\rho_{P,\pfrak}$ are given by finite field hypergeometric functions --- specifically, by Proposition $13$ of Deines-Fuselier-Long-Swisher-Tu's \cite{deines-fuselier-long-swisher-tu}, for each large prime $\qfrak\subseteq \o_{K(\zeta_3)}$ of $\o_{K(\zeta_3)}$, writing $q := \Nm\,{\qfrak}$ and $\eta: (\o_{K(\zeta_3)}/\qfrak)^\times\surj \mu_6\subseteq \Z[\zeta_3]^\times\subseteq \o_{K(\zeta_3)}^\times$ for the sextic character such that $\eta(\alpha)\equiv \alpha^{\frac{q-1}{6}}\pmod*{\qfrak}$,
\begin{align*}
&\left\{\tr(\rho_{P,\pfrak}(\Frob_\qfrak)), \sigma(\tr(\rho_{P,\pfrak}(\Frob_\qfrak)))\right\}
\\&= \left\{-\eta(-1)\cdot q\cdot \twoFone{\eta}{\eta^2}{\eta^5}{f_a(P)}_q, -\eta^{-1}(-1)\cdot q\cdot \twoFone{\eta^{-1}}{\eta^{-2}}{\eta^{-5}}{f_a(P)}_q\right\}
\end{align*}\noindent
as sets.

By Proposition $9$ of Deines-Fuselier-Long-Swisher-Tu's \cite{deines-fuselier-long-swisher-tu}, we see that
\begin{align*}
\twoFone{\eta}{\eta^2}{\eta^5}{f_a(P)}_q
&= \eta^3(-1)\cdot \eta(-f_a(P))\cdot \eta^2(1-f_a(P))\cdot \frac{J(\eta^2, \eta^3)}{J(\eta, \eta^4)}\cdot \twoFone{\eta^{-1}}{\eta^{-2}}{\eta^{-5}}{f_a(P)}_q
\\&= \eta(f_a(P))\cdot \eta^2(1 - f_a(P))\cdot \frac{J(\eta^2, \eta^3)}{J(\eta, \eta^4)}\cdot \twoFone{\eta^{-1}}{\eta^{-2}}{\eta^{-5}}{f_a(P)}_q,
\end{align*}\noindent
where $J(\cdot, \cdot)$ denotes the usual Jacobi sum.

Now let us evaluate the ratio of Jacobi sums. Of course $\frac{J(\eta^2, \eta^3)}{J(\eta, \eta^4)} = \frac{G(\eta^2)G(\eta^3)}{G(\eta)G(\eta^4)}$ with $G(\cdot)$ the usual Gauss sum (with respect to a chosen additive character $\o_{K(\zeta_3)}/\qfrak\to \mu_q(\C)$). Now we apply the Hasse-Davenport relation with "base" multiplicative character $\eta$ and "shift" multiplicative character $\eta^3$. We find $G(\eta)G(\eta^4) = -\eta^{-2}(2)G(\eta^2)G(\triv)G(\eta^3)$, where as usual $G(\triv) = -1$. So we conclude that 
\begin{align*}
\twoFone{\eta}{\eta^2}{\eta^5}{f_a(P)}_q &= \eta\left(f_a(P)\cdot \left(\frac{1 - f_a(P)}{4}\right)^2\right)\cdot \twoFone{\eta^{-1}}{\eta^{-2}}{\eta^{-5}}{f_a(P)}_q.
\end{align*}

Finally $f_a(P)\cdot \left(\frac{1-f_a(P)}{4}\right)^2 = \frac{X^6}{a^2}\cdot \left(\frac{Y^3}{a^2}\right)^2 = \left(\frac{XY}{a}\right)^6$ is a sixth power in $K$, thus in $K(\zeta_3)$, thus since $\qfrak$ is large $\eta\left(f_a(P)\cdot \left(\frac{1 - f_a(P)}{4}\right)^2\right) = 1$. So we see by Chebotarev and Brauer-Nesbitt (and Faltings' proof of semisimplicity) that $\rho_{P,\pfrak}\otimes_\Z \Q\iso (\sigma\circ \rho_{P,\pfrak})\otimes_\Z \Q$ as claimed.

Consequently the inclusion $\Q(\zeta_3)\otimes_\Q \Q_p\inj \End_{\Q_p[\Gal(\Qbar/K(\zeta_3))]}(\rho_{A_{f_a(P),p}}\otimes_\Z \Q)$ is \emph{not} an isomorphism. Consequently, by Faltings' proof of the Tate conjecture for endomorphisms of abelian varieties, the inclusion $\Q(\zeta_3)\inj \End_{K(\zeta_3)}^0(A_{f_a(P)})$ is not an isomorphism. But because the $\Q$-subalgebra of $\End_{K(\zeta_3)}^0(A_{f_a(P)})$ fixed by complex conjugation must be of dimension at least $\frac{1}{2}\dim_\Q(\End_{K(\zeta_3)}^0(A_{f_a(P)}))$ (since said involution respects products and can be diagonalized), we see that $\dim_\Q(\End_K^0(A_{f_a(P)}))\geq 2$, where we have used that a $K(\zeta_3)$-endomorphism is fixed by complex conjugation if and only if it descends to a $K$-endomorphism (consider the graph).

We conclude that, for $P\in C_a(K)$ with $f_a(P)\not\in \{0,1,\infty\}$, $A_{f_a(P)}/K$ is of $\GL_2$-type over $K$. We have moreover already seen that there is an explicit finite set $S$ such that each $A_{f_a(P)}/K$ has good reduction outside $S$. We are done by Theorem \ref{the odd degree theorem}.
\end{proof}

\appendix
\section{Explicit extension of the hypergeometric family over the singular points.}\label{explicit extension of the hypergeometric family appendix}
In this section we will explain why the family of abelian surfaces over $C_a - f_a^{-1}(\{0,1,\infty\})$ which we explicitly constructed in Section \ref{the example section} extends to a family of abelian surfaces over the whole of $C_a$. Again, this is more or less checked or asserted in a number of references, but we could not find a treatment to our liking in the literature and anyway the calculations are really a treat. Though we treat the particular case of abelian surfaces of interest to us here, it should be clear how to generalize the below calculations to an arbitrary hypergeometric family associated to the triangle group $\Delta(p,q,r)$ with $\frac{1}{p} + \frac{1}{q} + \frac{1}{r} < 1$ and $p,q,r\in \Z^+$.

We will reuse all notation from Section \ref{the example section}. To see the claim it suffices to work over $\C$. Since the family $\lambda\mapsto E_\lambda$ is isotrivial, it further suffices to show that the pullback family $P\mapsto \Jac\,{X_{f_a(P)}}$ extends to all of $C_a$. This is a map from a punctured curve to the moduli space $A_3$. By the Borel extension theorem (here basically Riemann extension) it extends to a map to the Satake/Baily-Borel compactification $C_a\to A_3^{\mathrm{B.B.}}$. Let us now check at the level of periods that the family does not degenerate and indeed extends holomorphically over those $P$ for which $f_a(P)\in \{0, 1, \infty\}$.

As in Remark $12$ of Archinard's \cite{archinard}, we see that the holomorphic differentials on $X_\lambda$ are given by:
\begin{align*}
\omega_1 &:= \frac{dx}{y} = \frac{dx}{\sqrt[6]{x^4(1-x)^3(1-\lambda\cdot x)}},
\\\omega_4 &:= \frac{x^2 (1-x)^2 dx}{y^4} = \frac{dx}{\sqrt[3]{x^2(1-\lambda\cdot x)^2}},
\\\omega_5 &:= \frac{x^3 (1-x)^2 dx}{y^5} = \frac{dx}{\sqrt[6]{x^2(1-x)^3(1-\lambda\cdot x)^5}}.
\end{align*}

Now let us discuss how to compute the periods of $X_\lambda$. Note that the map $X_\lambda\to \P^1$ induced by $(x,y)\mapsto x$ on the singular model of $X_\lambda$ is unramified outside $\{0,1,\frac{1}{\lambda},\infty\}$. We will occasionally regard this map as $X_\lambda\to X_\lambda/\mu_6\iso \P^1$ --- note also that the map $X_\lambda\to E_\lambda$ via $(x,y)\mapsto (x,y^2)$ at the level of singular models may similarly be regarded as $X_\lambda\to X_\lambda/\mu_2\iso E_\lambda$.

In any case, as is classical, $H_1(X_\lambda, \Z)$ is generated by the various lifts of the Pochhammer cycles $\gamma_{0,1}$ and $\gamma_{\frac{1}{\lambda},\infty}$ between $0$ and $1$ and $\frac{1}{\lambda}$ and $\infty$, respectively. Writing $\widetilde{\gamma}_{0,1}$ and $\widetilde{\gamma}_{\frac{1}{\lambda},\infty}$ for the "principal" such lifts (determined by the choice of the principal branch of the sixth root), all others are given by $\zeta_6^k\cdot \widetilde{\gamma}_{0,1}$ and $\zeta_6^{k'}\cdot \widetilde{\gamma}_{\frac{1}{\lambda},\infty}$. Thus we obtain a set of twelve generators of the rank $6$ free $\Z$-module $H_1(X_\lambda,\Z)$.

This makes it straightforward to compute the periods of $X_\lambda$. Write, for $i\in \{1, 4, 5\}$, $\mu_i := \int_{\widetilde{\gamma}_{0,1}} \omega_i$ and $\nu_i := \int_{\widetilde{\gamma}_{\frac{1}{\lambda},\infty}} \omega_i$. Write $\vec{\mu} := \left(\begin{array}{c} \mu_1 \\ \mu_4 \\ \mu_5\end{array}\right)$ and similarly for $\vec{\nu}$. Then the periods of our chosen basis of holomorphic $1$-forms over $\zeta_6^k\cdot \widetilde{\gamma}_{0,1}$ satisfy:
\begin{align*}
\left(\begin{array}{c} \int_{\zeta_6^k\cdot \widetilde{\gamma}_{0,1}} \omega_1 \\ \int_{\zeta_6^k\cdot \widetilde{\gamma}_{0,1}} \omega_4 \\ \int_{\zeta_6^k\cdot \widetilde{\gamma}_{0,1}} \omega_5\end{array}\right) &= \diag(\zeta_6^{-k}, \zeta_6^{-4k}, \zeta_6^{-5k})\cdot \left(\begin{array}{c} \int_{\widetilde{\gamma}_{0,1}} \omega_1 \\ \int_{\widetilde{\gamma}_{0,1}} \omega_4 \\ \int_{\widetilde{\gamma}_{0,1}} \omega_5\end{array}\right),
\\&= \diag(\zeta_6^{-k}, \zeta_6^{-4k}, \zeta_6^{-5k})\cdot \vec{\mu}
\end{align*}\noindent
and similarly for the periods over $\zeta_6^{k'}\cdot \widetilde{\gamma}_{\frac{1}{\lambda},\infty}$.

We next note that the $\Z$-span of the matrices $\diag(\zeta_6^k, \zeta_6^{4k}, \zeta_6^{5k})$ has basis $\id$, $\diag(\zeta_6, \zeta_6^4, \zeta_6^5)$, $\diag(0, 2, 0)$, and $\diag(0, 2\zeta_6, 0)$, the latter two arising from the relations $$\diag(\zeta_6^2, \zeta_6^8, \zeta_6^{10}) = -\id +\, \diag(\zeta_6, \zeta_6^4, \zeta_6^5) - \diag(0, 2\zeta_6, 0)$$ and $$\diag(\zeta_6^3, \zeta_6^{12}, \zeta_6^{15}) = -\id +\, \diag(0,2,0).$$

Consequently the rank $6$ period lattice of $X_\lambda$ with respect to the chosen basis $(\omega_1, \omega_4, \omega_5)$ of holomorphic $1$-forms on $X_\lambda$ and the chosen generators of $H_1(X_\lambda, \Z)$ is the $\Z$-span inside $\C^3$ of $\vec{\mu}$, $\vec{\nu}$, $\diag(\zeta_6, \zeta_6^4, \zeta_6^5)\cdot \vec{\mu}$, $\diag(\zeta_6, \zeta_6^4, \zeta_6^5)\cdot \vec{\nu}$, $\diag(0,2,0)\cdot \vec{\mu}$, $\diag(0,2,0)\cdot \vec{\nu}$, $\diag(0,2\zeta_6,0)\cdot \vec{\mu}$, and $\diag(0,2\zeta_6,0)\cdot \vec{\nu}$. We of course recognize the $\Z$-span of the last four generators as a copy of the period lattice of the CM elliptic curve $E_\lambda/\C$ --- this also provides the required two relations, namely that\footnote{We will be more precise since the calculations are so pleasant.} $\mu_4, \zeta_6\cdot \mu_4\in \Z[\zeta_6]\cdot \nu_4$, among the eight generators of this rank $6$ lattice.

So we finally find the following $\Z$-basis for the period lattice of $X_\lambda$: $\vec{\mu}$, $\vec{\nu}$, $\diag(\zeta_6, \zeta_6^4, \zeta_6^5)\cdot \vec{\mu}$, $\diag(\zeta_6, \zeta_6^4, \zeta_6^5)\cdot \vec{\nu}$, $\diag(0,2,0)\cdot \vec{\nu}$, and $\diag(0,2\zeta_6,0)\cdot \vec{\nu}$.

Now let us actually calculate $\vec{\mu}$ and $\vec{\nu}$ in terms of hypergeometric functions. To do this it suffices to relate the integrals over Pochhammer cycles to usual integrals over a path, and then to invoke Euler's integral representation of Gauss's hypergeometric functions. Specifically, we have the following identities (the $(1 - e(\alpha))(1 - e(\beta))$ factors are calculated by expanding the relevant differentials around the relevant singular points):
\begin{align*}
\mu_1 = \int_{\widetilde{\gamma}_{0,1}} \omega_1 &= \left(1 - e\left(\frac{2}{3}\right)\right)\left(1 - e\left(\frac{1}{2}\right)\right)\cdot \int_0^1 \omega_1,
\\\mu_4 = \int_{\widetilde{\gamma}_{0,1}} \omega_4 &= \left(1 - e\left(\frac{2}{3}\right)\right)\left(1 - e\left(0\right)\right)\cdot \int_0^1 \omega_4,
\\\mu_5 = \int_{\widetilde{\gamma}_{0,1}} \omega_5 &= \left(1 - e\left(\frac{1}{3}\right)\right)\left(1 - e\left(\frac{1}{2}\right)\right)\cdot \int_0^1 \omega_5,
\end{align*}
\begin{align*}
\nu_1 = \int_{\widetilde{\gamma}_{\frac{1}{\lambda},\infty}} \omega_1 &= \left(1 - e\left(\frac{1}{6}\right)\right)\left(1 - e\left(\frac{4}{3}\right)\right)\cdot \int_{\frac{1}{\lambda}}^\infty \omega_1,
\\\nu_4 = \int_{\widetilde{\gamma}_{\frac{1}{\lambda},\infty}} \omega_4 &= \left(1 - e\left(\frac{2}{3}\right)\right)\left(1 - e\left(\frac{4}{3}\right)\right)\cdot \int_{\frac{1}{\lambda}}^\infty \omega_4,
\\\nu_5 = \int_{\widetilde{\gamma}_{\frac{1}{\lambda},\infty}} \omega_5 &= \left(1 - e\left(\frac{5}{6}\right)\right)\left(1 - e\left(\frac{5}{3}\right)\right)\cdot \int_{\frac{1}{\lambda}}^\infty \omega_5,
\end{align*}\noindent
where as usual $e(x) := e^{2\pi i x}$. So we see that e.g.\ $\mu_4 = 0$.\footnote{After all, $\omega_4$ is the pullback of a differential on $E_\lambda\simeq X_\lambda/\mu_2$ and the map $E_\lambda\to \P^1$ via $(x,y)\mapsto x$ at the level of its singular model is unramified over $1$.} Inserting Euler's integral representation $B(b,c-b)\cdot \twoFone{a}{b}{c}{z} = \int_0^1 x^{b-1}(1-x)^{c-a-b-1}(1-z\cdot x)^{-a}\, dx$ we find:
\begin{align*}
\vec{\mu} = \left(\begin{array}{c} 2(1 - \zeta_3^2)\cdot B\left(\frac{1}{3}, \frac{1}{2}\right)\cdot \twoFone{\frac{1}{6}}{\frac{1}{3}}{\frac{5}{6}}{\lambda}
\\ 0 \\ 2(1 - \zeta_3)\cdot B\left(\frac{2}{3},\frac{1}{2}\right)\cdot \twoFone{\frac{5}{6}}{\frac{2}{3}}{\frac{7}{6}}{\lambda} \end{array}\right).
\end{align*}\noindent
Via the change of variables $x\mapsto \frac{1}{\lambda x}$ and then a suitable application of Euler's integral representation (exactly as in $(15)$ of Deines-Fuselier-Long-Swisher-Tu's \cite{deines-fuselier-long-swisher-tu}), we similarly find that:
\begin{align*}
\vec{\nu} = \left(\begin{array}{c}
(1 - \zeta_6)(1 - \zeta_3)\cdot (-1)^{-\frac{2}{3}}\cdot B\left(\frac{1}{3}, \frac{5}{6}\right)\cdot \lambda^{\frac{1}{6}}\cdot \twoFone{\frac{1}{2}}{\frac{1}{3}}{\frac{7}{6}}{\lambda}
\\(1 - \zeta_3^2)(1 - \zeta_3)\cdot (-1)^{-\frac{2}{3}}\cdot B\left(\frac{1}{3},\frac{1}{3}\right)\cdot \lambda^{-\frac{1}{3}}
\\(1 - \zeta_6^5)(1 - \zeta_3^2)\cdot (-1)^{-\frac{4}{3}}\cdot B\left(\frac{2}{3},\frac{1}{6}\right)\cdot\lambda^{-\frac{1}{6}}\cdot\twoFone{\frac{1}{2}}{\frac{2}{3}}{\frac{5}{6}}{\lambda}
\end{array}\right),
\end{align*}\noindent
where we have used that $\twoFone{0}{\frac{1}{3}}{\frac{2}{3}}{\lambda} = 1$ in the calculation of $\nu_4$.

So we have calculated the period lattice of $X_\lambda$ explicitly.

Now it is time to show that the family of Jacobians does not degenerate as $f_a(P)\to 0$, $1$, or $\infty$. Since we are working over $\C$ it suffices to show the claim for $C_1/\C$ only (via the evident isomorphism $C_a\iso C_1$ over $\C$, which also takes $f_a\mapsto f_1$). So, writing $C_1 : X^6 + 4Y^3 = 1$ for clarity, we have $\lambda = X^6$ and $1 - \lambda  = 4Y^3$. Consequently (since we do not want to discuss branches of $z\mapsto \sqrt[6]{z}$) there is an absolute constant $t\in \Z/6$ such that we may write $\vec{\mu}$ and $\vec{\nu}$ at a $\lambda = f_1(P)$ with $P =: (X,Y)$ as:
\begin{align*}
\vec{\mu} &= \left(\begin{array}{c} 2(1 - \zeta_3^2)\cdot B\left(\frac{1}{3}, \frac{1}{2}\right)\cdot \twoFone{\frac{1}{6}}{\frac{1}{3}}{\frac{5}{6}}{X^6}
\\ 0 \\ 2(1 - \zeta_3)\cdot B\left(\frac{2}{3},\frac{1}{2}\right)\cdot \twoFone{\frac{5}{6}}{\frac{2}{3}}{\frac{7}{6}}{X^6} \end{array}\right),
\\\vec{\nu} &= \left(\begin{array}{c}
(1 - \zeta_6)(1 - \zeta_3)\cdot (-1)^{-\frac{2}{3}}\cdot B\left(\frac{1}{3}, \frac{5}{6}\right)\cdot \zeta_6^t\cdot X\cdot \twoFone{\frac{1}{2}}{\frac{1}{3}}{\frac{7}{6}}{X^6}
\\(1 - \zeta_3^2)(1 - \zeta_3)\cdot (-1)^{-\frac{2}{3}}\cdot B\left(\frac{1}{3},\frac{1}{3}\right)\cdot \zeta_6^{-2t}\cdot X^{-2}
\\(1 - \zeta_6^5)(1 - \zeta_3^2)\cdot (-1)^{-\frac{4}{3}}\cdot B\left(\frac{2}{3},\frac{1}{6}\right)\cdot\zeta_6^{-t}\cdot X^{-1}\cdot\twoFone{\frac{1}{2}}{\frac{2}{3}}{\frac{5}{6}}{X^6}
\end{array}\right).
\end{align*}

In a punctured neighbourhood around each of the points in $f_1^{-1}(\{0, 1, \infty\})$ we have that $\mu_1, \mu_5, \nu_4\neq 0$, so we may change our basis of holomorphic $1$-forms on $X_\lambda$ via $\diag(\mu_1^{-1}, \nu_4^{-1}, \mu_5^{-1})\in \GL_3(\C)$. 

This results in the following $\Z$-basis for an equivalent, aka homothetic, period lattice for $X_\lambda$: $(1,0,1)$, $(\zeta_6, 0, \zeta_6^5)$, $(0, 2, 0)$, $(0, 2\zeta_6, 0)$, and the following two vectors:
\begin{align*}
\left(\begin{array}{c} \const\cdot s_{\frac{1}{6}, \frac{1}{3}; \frac{5}{6}}(X^6) \\ 1 \\ \widetilde{\const}\cdot s_{\frac{5}{6}, \frac{2}{3}; \frac{7}{6}}(X^6)\end{array}\right)\text{ and }
\left(\begin{array}{c} \zeta_6\cdot \const\cdot s_{\frac{1}{6}, \frac{1}{3}; \frac{5}{6}}(X^6) \\ \zeta_6^4 \\ \zeta_6^5\cdot \widetilde{\const}\cdot s_{\frac{5}{6}, \frac{2}{3}; \frac{7}{6}}(X^6)\end{array}\right),
\end{align*}\noindent
where $\const, \widetilde{\const}\in \C^\times$ are absolute constants easily calculated from the above, and $$s_{a,b;c}(z) := z^{1-c}\cdot \frac{\twoFone{a-c+1}{b-c+1}{2-c}{z}}{\twoFone{a}{b}{c}{z}}$$ is the classical Schwarz triangle function.

All that is left to note is that the functions $s_{\frac{1}{6}, \frac{1}{3}; \frac{5}{6}}(f_1(P))$ and $s_{\frac{5}{6}, \frac{2}{3}; \frac{7}{6}}(f_1(P))$ extend holomorphically to the whole of $C_1(\C)$ --- indeed this is tautological on $C_1(\C) - f_1^{-1}([1,\infty])$ since we are implicitly using the usual principal branch of the Gauss hypergeometric function in using its definition via Euler's integral over a Pochhammer contour, and then we extend to $f_1^{-1}([1,\infty])$ via the identity
\begin{align*}
\twoFone{a}{b}{c}{z} &= \frac{\Gamma(c)\Gamma(c-a-b)}{\Gamma(c-a)\Gamma(c-b)}\cdot \twoFone{a}{b}{a+b+1-c}{1-z} \\&+ \frac{\Gamma(c)\Gamma(a+b-c)}{\Gamma(a)\Gamma(b)}\cdot (1-z)^{c-a-b}\cdot \twoFone{c-a}{c-b}{1+c-a-b}{1-z}
\end{align*}\noindent
and the fact that $1 - X^6 = 4Y^3$ has a holomorphic cube root on $C_1(\C)$ (note that $c - a - b\in \frac{1}{3}\cdot \Z$ for $(a,b,c)\in \left\{\left(\frac{1}{6}, \frac{1}{3}, \frac{5}{6}\right), \left(\frac{5}{6}, \frac{2}{3}, \frac{7}{6}\right)\right\}$).

So the map $C_a\to A_3^{\mathrm{B.B.}}$ via $P\mapsto \Jac\,{X_{f_a(P)}}$ indeed lands inside $A_3$, whence we have produced a family $J\to C_a$ extending $\lambda\mapsto \Jac\,{X_\lambda}$, and we produce the family $A\to C_a$ extending $\lambda\mapsto A_\lambda$ by taking the fibre above $P\in C_a$ to be the connected component of the identity in the kernel of the map $J_P\to E_P$, where $E\to C_a$ is the evident extension of the isotrivial family $\lambda\mapsto E_\lambda$.

So we are done.

~\\~\\

We will end by making transparent the "extra" endomorphism of $A_{f_1(P)}/\C$ at the level of its period lattice, as in Deines-Fuselier-Long-Swisher-Tu \cite{deines-fuselier-long-swisher-tu} --- after all, $C_1/\C$ corresponds to the commutator subgroup of the triangle group $\Delta(3,6,6)$, which is arithmetic, so $C_1/\C$ must have an interpretation as a Shimura curve with level structure attached to the corresponding quaternion algebra over $\Q$, namely the indefinite quaternion algebra of discriminant $6$. (In those terms our family $P\mapsto A_{f_1(P)}$ is just the corresponding universal family.)

We will project the period lattice to $\C^2$ via $\pi: \C^3\surj \C^2$ via $(x,y,z)\mapsto (x,z)$ since it suffices to work with the period lattice of an isogenous abelian surface. We find the following generators for the relevant period lattice: $\pi(\vec{\mu}), \pi(\vec{\nu}), \diag(\zeta_6, \zeta_6^5)\cdot \pi(\vec{\mu}), \diag(\zeta_6, \zeta_6^5)\cdot \pi(\vec{\nu})$. Via the Euler identity $$\twoFone{a}{b}{c}{z} = (1-z)^{c-a-b}\cdot \twoFone{a-c+1}{b-c+1}{2-c}{z},$$ we see that there is an absolute constant $t'\in \Z/6$ such that:
\begin{align*}
\pi(\vec{\mu}) &= \left(\begin{array}{c} 2(1 - \zeta_3^2)\cdot B\left(\frac{1}{3}, \frac{1}{2}\right)\cdot \twoFone{\frac{1}{6}}{\frac{1}{3}}{\frac{5}{6}}{X^6}
\\ 2(1 - \zeta_3)\cdot B\left(\frac{2}{3},\frac{1}{2}\right)\cdot \twoFone{\frac{5}{6}}{\frac{2}{3}}{\frac{7}{6}}{X^6} \end{array}\right),
\\\pi(\vec{\nu}) &= \left(\begin{array}{c}
(1 - \zeta_6)(1 - \zeta_3)\cdot (-1)^{-\frac{2}{3}}\cdot B\left(\frac{1}{3}, \frac{5}{6}\right)\cdot \zeta_6^{t'}\cdot X\cdot (4^{\frac{1}{3}}\cdot Y)\cdot \twoFone{\frac{5}{6}}{\frac{2}{3}}{\frac{7}{6}}{X^6}
\\(1 - \zeta_6^5)(1 - \zeta_3^2)\cdot (-1)^{-\frac{4}{3}}\cdot B\left(\frac{2}{3},\frac{1}{6}\right)\cdot\zeta_6^{-t'}\cdot X^{-1}\cdot (4^{\frac{1}{3}}\cdot Y)^{-1}\cdot\twoFone{\frac{1}{6}}{\frac{1}{3}}{\frac{5}{6}}{X^6}
\end{array}\right).
\end{align*}

We now claim that the transformation
\begin{align*}
M := \left(\begin{array}{cc}
0
&
\frac{(1 - \zeta_6)(1 - \zeta_3)\cdot (-1)^{-\frac{2}{3}}\cdot B\left(\frac{1}{3}, \frac{5}{6}\right)\cdot \zeta_6^{t'}\cdot X\cdot (4^{\frac{1}{3}}\cdot Y)}{(1 - \zeta_3)\cdot B\left(\frac{2}{3},\frac{1}{2}\right)}
\\
\frac{(1 - \zeta_6^5)(1 - \zeta_3^2)\cdot (-1)^{-\frac{4}{3}}\cdot B\left(\frac{2}{3},\frac{1}{6}\right)\cdot\zeta_6^{-t'}\cdot X^{-1}\cdot (4^{\frac{1}{3}}\cdot Y)^{-1}}{(1 - \zeta_3^2)\cdot B\left(\frac{1}{3}, \frac{1}{2}\right)}
&
0
\end{array}\right)\in \GL_2(\C)
\end{align*}
stabilizes the period lattice --- this is the aforementioned "extra" endomorphism.

Miraculously, $M^2 = 2\cdot \id$ thanks to the equality $$\frac{[(1-\zeta_6)(1-\zeta_6^5)]\cdot [(1-\zeta_3)(1-\zeta_3^2)]\cdot B\left(\frac{2}{3},\frac{1}{6}\right)\cdot B\left(\frac{1}{3},\frac{5}{6}\right)}{[(1-\zeta_3)(1-\zeta_3^2)]\cdot B\left(\frac{1}{3},\frac{1}{2}\right)\cdot B\left(\frac{2}{3},\frac{1}{2}\right)} = 2,$$ the miracle of course being that we have calculated constants sufficiently correctly.

So we may easily check that $M$ stabilizes the period lattice and thus descends to an endomorphism of an abelian variety isogenous to $A_{f_1(P)}/\C$. Indeed by construction $M\cdot \pi(\vec{\mu}) = 2\cdot \pi(\vec{\nu})$, and so $M\cdot \diag(\zeta_6,\zeta_6^5)\cdot \pi(\vec{\mu}) = 2\cdot \diag(\zeta_6^5,\zeta_6)\cdot \pi(\vec{\nu})\in \Z[\diag(\zeta_6, \zeta_6^5)]\cdot \pi(\vec{\nu})$. But then $$M\cdot \pi(\vec{\nu}) = \frac{1}{2}\cdot M^2\cdot \pi(\vec{\mu}) = \pi(\vec{\mu}),$$ and $$M\cdot \diag(\zeta_6,\zeta_6^5)\cdot \pi(\vec{\nu}) = \diag(\zeta_6^5,\zeta_6)\cdot M\cdot \pi(\vec{\nu}) = \diag(\zeta_6^5,\zeta_6)\cdot \pi(\vec{\mu})\in \Z[\diag(\zeta_6,\zeta_6^5)]\cdot \pi(\vec{\mu}).$$

Writing $B_6/\Q$ for the indefinite quaternion algebra over $\Q$ of discriminant $6$, we have therefore produced an explicit inclusion $B_6\inj \End_\C^0(A_{f_1(P)})$ (namely $i\mapsto 1 + 2\zeta_3, j\mapsto M$), concluding this aside and the appendix.

\renewcommand{\refname}{References.}

\bibliographystyle{amsplain}

\bibliography{modularityandmordellnumberone}

\providecommand{\bysame}{\leavevmode\hbox to3em{\hrulefill}\thinspace}
\providecommand{\MR}{\relax\ifhmode\unskip\space\fi MR }
\providecommand{\MRhref}[2]{%
  \href{http://www.ams.org/mathscinet-getitem?mr=#1}{#2}
}
\providecommand{\href}[2]{#2}
\begin{thebibliography}{10}

\bibitem{ten-author-paper}
Patrick~B. Allen, Frank Calegari, Ana Caraiani, Toby Gee, David Helm, Bao V.~Le
  Hung, James Newton, Peter Scholze, Richard Taylor, and Jack~A. Thorne,
  \emph{Potential automorphy over {CM} fields},  (2018), arXiv:1812.09999.

\bibitem{my-isogeny-estimate}
Levent Alp\"{o}ge, \emph{Un peu d'effectivit\'{e} pour les vari\'{e}t\'{e}s
  modulaires de {H}ilbert-{B}lumenthal},  (2021).

\bibitem{paper-with-brian}
Levent Alp\"{o}ge and Brian Lawrence, Forthcoming.

\bibitem{my-thesis}
Levent Hasan~Ali Alp\"{o}ge, \emph{Points on {C}urves}, ProQuest LLC, Ann
  Arbor, MI, 2020, Thesis (Ph.D.)--Princeton University. \MR{4209988}

\bibitem{archinard}
Nat\'{a}lia Archinard, \emph{Hypergeometric abelian varieties}, Canad. J. Math.
  \textbf{55} (2003), no.~5, 897--932. \MR{2005278}

\bibitem{baldi-grossi}
Gregorio Baldi and Giada Grossi, \emph{Finite descent obstruction for {H}ilbert
  modular varieties}, Canad. Math. Bull. \textbf{64} (2021), no.~2, 452--473.
  \MR{4273212}

\bibitem{bogomolov-tschinkel}
Fedor Bogomolov and Yuri Tschinkel, \emph{Unramified correspondences},
  Algebraic number theory and algebraic geometry, Contemp. Math., vol. 300,
  Amer. Math. Soc., Providence, RI, 2002, pp.~17--25. \MR{1936365}

\bibitem{carayol}
Henri Carayol, \emph{Sur les repr\'{e}sentations {$l$}-adiques associ\'{e}es
  aux formes modulaires de {H}ilbert}, Ann. Sci. \'{E}cole Norm. Sup. (4)
  \textbf{19} (1986), no.~3, 409--468. \MR{870690}

\bibitem{cohen-wolfart}
Paula Cohen and J\"{u}rgen Wolfart, \emph{Modular embeddings for some
  nonarithmetic {F}uchsian groups}, Acta Arith. \textbf{56} (1990), no.~2,
  93--110. \MR{1075639}

\bibitem{deines-fuselier-long-swisher-tu}
Alyson Deines, Jenny~G. Fuselier, Ling Long, Holly Swisher, and Fang-Ting Tu,
  \emph{Generalized {L}egendre curves and quaternionic multiplication}, J.
  Number Theory \textbf{161} (2016), 175--203. \MR{3435724}

\bibitem{dickson}
Leonard~Eugene Dickson, \emph{Linear groups, with an exposition of the galois
  field theory}, B.G. Teubner, Leipzig, 1901.

\bibitem{dimitrov}
Mladen Dimitrov, \emph{Galois representations modulo {$p$} and cohomology of
  {H}ilbert modular varieties}, Ann. Sci. \'{E}cole Norm. Sup. (4) \textbf{38}
  (2005), no.~4, 505--551. \MR{2172950}

\bibitem{faltings}
G.~Faltings, \emph{Endlichkeitss\"{a}tze f\"{u}r abelsche {V}ariet\"{a}ten
  \"{u}ber {Z}ahlk\"{o}rpern}, Invent. Math. \textbf{73} (1983), no.~3,
  349--366. \MR{718935}

\bibitem{gaudron-remond}
\'{E}ric Gaudron and Ga\"{e}l R\'{e}mond, \emph{Polarisations et
  isog\'{e}nies}, Duke Math. J. \textbf{163} (2014), no.~11, 2057--2108.
  \MR{3263028}

\bibitem{gaudron-remond-periods}
\bysame, \emph{Th\'{e}or\`eme des p\'{e}riodes et degr\'{e}s minimaux
  d'isog\'{e}nies}, Comment. Math. Helv. \textbf{89} (2014), no.~2, 343--403.
  \MR{3225452}

\bibitem{hida}
Haruzo Hida, \emph{On abelian varieties with complex multiplication as factors
  of the {J}acobians of {S}himura curves}, Amer. J. Math. \textbf{103} (1981),
  no.~4, 727--776. \MR{623136}

\bibitem{langlands}
Robert~P. Langlands, \emph{Base change for {${\rm GL}(2)$}}, Annals of
  Mathematics Studies, No. 96, Princeton University Press, Princeton, N.J.;
  University of Tokyo Press, Tokyo, 1980. \MR{574808}

\bibitem{masser-wustholz-endomorphism-estimate}
D.~W. Masser and G.~W\"{u}stholz, \emph{Endomorphism estimates for abelian
  varieties}, Math. Z. \textbf{215} (1994), no.~4, 641--653. \MR{1269495}

\bibitem{murty-pasten}
M.~Ram Murty and Hector Pasten, \emph{Modular forms and effective {D}iophantine
  approximation}, J. Number Theory \textbf{133} (2013), no.~11, 3739--3754.
  \MR{3084298}

\bibitem{poonen}
Bjorn Poonen, \emph{Unramified covers of {G}alois covers of low genus curves},
  Math. Res. Lett. \textbf{12} (2005), no.~4, 475--481. \MR{2155225}

\bibitem{raynaud-pp}
Michel Raynaud, \emph{Sch\'{e}mas en groupes de type {$(p,\dots, p)$}}, Bull.
  Soc. Math. France \textbf{102} (1974), 241--280. \MR{419467}

\bibitem{raynaud-hauteurs-et-isogenies}
\bysame, \emph{Hauteurs et isog\'{e}nies}, no. 127, 1985, Seminar on arithmetic
  bundles: the Mordell conjecture (Paris, 1983/84), pp.~199--234. \MR{801923}

\bibitem{silverberg}
A.~Silverberg, \emph{Fields of definition for homomorphisms of abelian
  varieties}, J. Pure Appl. Algebra \textbf{77} (1992), no.~3, 253--262.
  \MR{1154704}

\bibitem{snowden}
Andrew Snowden, \emph{On two dimensional weight two odd representations of
  totally real fields},  (2009), arXiv:0905.4266.

\bibitem{von-kanel}
Rafael von K\"{a}nel, \emph{The effective {S}hafarevich conjecture for abelian
  varieties of {$\rm GL_2$}-type}, Forum Math. Sigma \textbf{9} (2021), Paper
  No. e39, 29. \MR{4264210}

\end{thebibliography}

\end{document}